\documentclass[12pt,a4paper]{amsart}
\usepackage{amsmath,amsthm,amssymb,latexsym,a4wide}

\newtheorem{theorem}{Theorem}
\newtheorem{lemma}[theorem]{Lemma}
\newtheorem{corollary}[theorem]{Corollary}
\newtheorem{proposition}[theorem]{Proposition}

\newtheorem{remark}[theorem]{Remark}
\usepackage[all]{xy}
\usepackage{enumerate}
\numberwithin{equation}{section}

\begin{document}

\title{Partitioned binary relations}
\author{Paul Martin and Volodymyr Mazorchuk}
%\date{\today}

\begin{abstract}
We define the category of partitioned binary relations and show
that it contains many classical diagram categories, including 
categories of binary relations, maps, injective maps,
partitions, (oriented) Brauer diagrams and (oriented)
Temperley-Lieb diagrams. We construct a one-parameter deformation 
of the  category of partitioned binary relations and show that it
gives rise to classical one-parameter deformations of 
partition, Brauer and Temperley-Lieb categories.
Finally, we describe a factorization of partitioned binary
relations into a product of certain idempotents and pairs of usual
binary relations.
\end{abstract}
\maketitle

\section{Introduction and description of the results}\label{s1}

Diagram algebras and categories are interesting and rich objects 
of study in modern representation theory with many application to,
among others, statistical mechanics, see the book \cite{Mar2}
and the surveys \cite{Mar3,Koe}, and topology, see \cite{RT}. 
Classical diagram categories include the Brauer category (see 
\cite{Br}), the partition category (see \cite{Mar1}), the 
Temperley-Lieb category (which has many important 
applications  in topology, combinatorics and categorification,
see e.g. \cite{TL,BFK}) and their partial 
(alias rook) analogues (see \cite{Maz1,Gr,HL}), together with 
the category of binary relations (confer e.g. \cite{PW}).   
From the algebraic perspective all these categories have rich and 
non-trivial structure, though much less is known for the category
of binary relations than the others. Morphisms in these categories 
are described in terms of certain combinatorially defined sets with 
diagrammatic realization. Furthermore, most of the classical diagram 
categories admit a non-trivial one-parameter deformation, which also plays
a very important role in certain applications (see e.g. \cite{Br}).

The aim of the present paper is to show that both the partition
category and the category of binary relations are shadows of a
more general natural construction. We define a new category
which we call category of {\em partitioned binary relations}
and show that it provides a single overarching setting for 
all the categories mentioned above. Our main results are:
\begin{itemize}
\item The well-definedness of the new category (Theorem~\ref{thm99}).
\item Connection of the new category with the above mentioned 
classical objects (Section~\ref{s3}).
\item Functorial comparison of the representation theories of the new
category and the category of binary relations (Subsection~\ref{s3.1}).
\item Well-definedness of a certain flat deformation
(Theorem~\ref{thm5}), which has application 
in representation theory (confer \cite{CPS,CMPX}).
\item Factorization of morphisms in the new category in terms of
simpler structures (Theorem~\ref{thm77}).
\end{itemize}

A notable feature of our construction is that it is not 
straightforward. An obvious approach to such an overarching
construction is to relax the reflexive-symmetric-transitive condition
on the relations that constitute morphisms in the partition category.
In fact, this does yield the morphisms in the new category, 
but it does not determine a composition.
Another indication comes from the Temperley-Lieb category, or rather
its (topologically motivated) ``oriented'' generalization 
(see e.g.  \cite{Tu}). This is easy to extend to the level of the
Brauer category and the corresponding partial analogues. The diagrams
of this oriented version can be viewed as oriented graphs, which 
suggests a connection to the category of binary relations.
It is worth pointing out that both the partition category and the
category of binary relations have also 
recently appeared in a different context in \cite{FW}. 

The category of binary relations, or rather its endomorphism monoids, are 
classical objects of study in semigroup theory, see \cite{PW,Sc,Ko} 
and references therein. In \cite{MP} it is shown that every finite group 
appears as a maximal subgroup of some monoid of binary relations,
which shows that these monoids are structurally more complicated than 
the classical transformation semigroups generalizing the symmetric group 
(see \cite{GM} for the latter). 

The paper is organized as follows: In Section~\ref{s2} we define
the category $\mathfrak{PB}$ of partitioned binary relations;
in Section~\ref{s3} we show that it contains many classical 
categories mentioned above; in Section~\ref{s4} we show that 
the category $\mathfrak{PB}$ has a flat one-parameter deformation.
In Section~\ref{s5} we describe a factorization of 
partitioned binary relations, which we call {\em polarized
factorization}. It turns our that every partitioned binary relation
can be written as a product of three elements, two of which are
idempotents of a certain simple form, and the third one is a
``pair'' of usual binary relations. As an application, we show that 
almost all products of partitioned binary relations result in 
the full partitioned binary relation (in the limit of ``large'' objects).
\vspace{2mm}

\noindent
{\bf Acknowledgements.} An essential part of the research was done
during the visit of the first author to Uppsala, which was supported
by the Faculty of Natural Sciences of Uppsala University. 
The financial support and hospitality of Uppsala University are
gratefully acknowledged. For the second author the research was
partially supported by the Swedish Research Council.

\section{Category of partitioned binary relations}\label{s2}

We denote by $\mathbb{N}$ and $\mathbb{N}_0$ the sets of all
positive and non-negative integers, respectively.

\subsection{Partitioned binary relations}\label{s2.1}

Let $X$ and $Y$ be finite sets. A {\em partitioned binary relation} 
(PBR) on $(X,Y)$ is a binary relation $\alpha$ on the 
{\em disjoint} union of $X$  and $Y$. The sets $X$ and $Y$ are called 
the {\em domain} and the {\em codomain} of $\alpha$ and denoted 
by $\mathrm{Dom}(\alpha)$ and $\mathrm{Codom}(\alpha)$, respectively.
Clearly, the number of partitioned binary relations on $(X,Y)$
equals $2^{(|X|+|Y|)^2}$.

Sometimes it might happen that $X\cap Y\neq \varnothing$, or even
$X=Y$. In this case to distinguish between elements of the domain and 
the codomain, we write $a^{(d)}$ or $a^{(c)}$ for elements of 
$\mathrm{Dom}(\alpha)$ and $\mathrm{Codom}(\alpha)$, respectively.

A PBR $\alpha$ on $(X,Y)$ will be depicted as a directed graph drawn 
within a rectangular frame, with elements of $X$ and $Y$ 
represented by vertexes positioned on the right and left hand sides of 
the frame, respectively. The fact that $\alpha$ contains an edge 
$(a,b)\in (X\coprod Y)^2$ will be written $(a,b)\in\alpha$ and 
visualized by an arrow from $a$ to $b$ on the graph.
We will call $a$ and $b$ {\em elements} while $(a,b)$ will be called
an {\em edge}. An example of a partitioned binary relation from
$X=\{x_1,x_2,x_3,x_4\}$ to $Y=\{y_1,y_2,y_3,y_4,y_5,y_6,y_7,y_8,y_9\}$
is shown in Figure~\ref{fig1}. One can and we will use diagrams 
interchangeably with the set theoretic approach to PBRs.

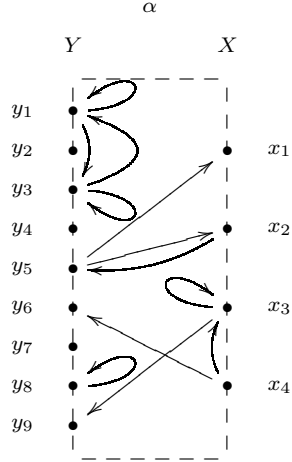
\begin{figure}
{\tiny
\begin{displaymath}
\xymatrixrowsep{1mm}
\xymatrixcolsep{1.5mm}
\xymatrix{
&&&\alpha&&&\\
&Y&&&&X&\\
&\ar@{--}[dddddddddd]\ar@{--}[rrrr]&&&&\ar@{--}[dddddddddd]&\\
y_1&\bullet\ar@(r,ur)[]\ar@/^/[dd]&&&&&\\
y_2&\bullet&&&&\bullet&x_1\\
y_3&\bullet\ar@(r,dr)[]\ar@/_2pc/[uu]&&&&&\\
y_4&\bullet&&&&\bullet\ar@/^/[lllld]&x_2\\
y_5&\bullet\ar[rrrruuu]\ar[rrrru]&&&&&\\
y_6&\bullet&&&&\bullet\ar[llllddd]\ar@(l,ul)[]&x_3\\
y_7&\bullet&&&&&\\
y_8&\bullet\ar@(r,ur)[]&&&&\bullet\ar@/^/[uu]\ar[uullll]&x_4\\
y_9&\bullet&&&&&\\
&\ar@{--}[rrrr]&&&&&
}
\end{displaymath}
\caption{A partitioned binary relation on $(X,Y)$}
\label{fig1}
}
\end{figure}

\subsection{Composition of partitioned binary relations}\label{s2.2}

In this subsection we define composition of PBRs in a categorical 
sense. That it, given a PBR $\alpha$ on $(X,Y)$ and a
PBR $\beta$ on $(Y,Z)$, we define their composition 
$\beta\circ\alpha$, which will be a PBR on $(X,Z)$. 

It will be convenient to start slightly more generally. Let 
$\aleph=(\alpha_1,\alpha_2,\alpha_3,\dots,\alpha_k)$ 
be a {\em composable}
sequence of PBRs in the above sense, 
that is $\mathrm{Codom}(\alpha_i)=
\mathrm{Dom}(\alpha_{i+1})$ for all $i=1,2,\dots,k-1$. 
Set $X_i:=\mathrm{Dom}(\alpha_{i})$ for $i=1,2,\dots,k$,
$X_{k+1}:=\mathrm{Codom}(\alpha_k)$, and 
$\displaystyle
X_{\coprod} :=\coprod_{i=1}^{k+1} X_i$.
A sequence $\xi=(a_1,b_1),(a_2,b_2),\dots,(a_m,b_m)$ of edges taken
from the PBRs in $\aleph$ is called {\em $\aleph$-connected} provided that 
\begin{enumerate}[(I)]
\item\label{c1} no two successive edges in $\xi$ are in the same PBR;
\item\label{c2} for every $i=1,2,\dots,m-1$ we have  $b_i=a_{i+1}$ 
(as elements of $X_{\coprod}$).
\end{enumerate}
We will also say that the $\aleph$-connected sequence 
$\xi$ {\em connects} $a_1$ to $b_m$. Note that on every step $i$
the element $b_i$ defines the PBR $\alpha_j$ containing $(a_{i+1},b_{i+1})$
uniquely due to condition \eqref{c2}. Note also that in the case
$k=1$, we necessarily have $m=1$.

Let $\alpha$ be a PBR on $(X,Y)$ and $\beta$ be a PBR
on $(Y,Z)$. We define the {\em composition} $\beta\circ\alpha$
as the PBR on $(X,Z)$ such that for every $a,b\in X\coprod Z$
the PBR $\beta\circ\alpha$ contains $(a,b)$ if and only if there
exists an $(\alpha,\beta)$-connected sequence connecting $a$ to $b$.
An example of composition of two PBRs is shown in Figure~\ref{fig2}.

\begin{figure}
{\tiny
\begin{displaymath}
\xymatrixrowsep{1mm}
\xymatrixcolsep{1.5mm}
\xymatrix{
&&\beta&&&&&\alpha&&&&&&&\beta\circ\alpha&&\\
\ar@{--}[rrrr]\ar@{--}[dddddddddd]&&&&\ar@{--}[dddddddddd]
&\ar@{--}[rrrr]\ar@{--}[dddddddddd]&&&&\ar@{--}[dddddddddd]&&&
\ar@{--}[rrrr]\ar@{--}[dddddddddd]&&&&\ar@{--}[dddddddddd]\\
&&&&\bullet\ar@{.}[r]&\bullet\ar@(r,ur)[]&&&&&&&&&&&\\
\bullet\ar@/^/[rrrr]&&&&\bullet\ar@/^/[llll]\ar@{.}[r]&\bullet\ar@/^/[rrrr]
\ar@/^2pc/[d]&&&&\ar@/^1pc/[llll]\bullet&&&
\bullet\ar@/^/[rrrr]\ar@/^/[rrrrd]\ar@/^/[dd]&&&&\bullet\ar@/^/[llll]\\
&&&&\bullet\ar@{.}[r]\ar@(l,dl)[]&\bullet\ar@/^2pc/[d]
&&&&\bullet&&&&&&&\bullet\\
\bullet&&&&\bullet\ar@{.}[r]\ar@/_1pc/[d]&
\bullet&&&&\bullet\ar@/_1.5pc/[d]&&&\bullet&&&&
\bullet\ar@/_1.5pc/[d]\\
&&&&\bullet\ar@{.}[r]\ar@/^/[llllu]\ar@/^2.5pc/[u]
&\bullet\ar[rrrruu]\ar@(r,dr)[]&&&&\bullet\ar@(l,dl)[]
&&&&&&&\bullet\ar@(l,dl)[]\\
\bullet\ar@/_/[rrrrdd]&&&&
\bullet\ar@{.}[r]\ar@(l,dl)[]&\bullet\ar[rrrrd]
&&&&\bullet&&&\bullet&&&&\bullet\\
&&&&\bullet\ar@{.}[r]&\bullet\ar[rrrrd]
&&&&\bullet\ar@/^/[llllu]&&&&&&&\bullet\ar@(l,dl)[]\\
\bullet\ar[rrrrd]&&&&\bullet\ar@{.}[r]\ar@/_/[lllluu]&
\bullet\ar@/^2.5pc/[d]&&&&\bullet\ar@/^/[llllu]&&&
\bullet\ar@/_/[uu]&&&&\bullet\\
&&&&\bullet\ar@{.}[r]&\bullet\ar@/_1pc/[u]&&&&&&&&&&&\\
\ar@{--}[rrrr]&&&&&\ar@{--}[rrrr]&&&&&&&\ar@{--}[rrrr]&&&&\\
}
\end{displaymath}
\caption{Composition of partitioned binary relations}
\label{fig2}
}
\end{figure}
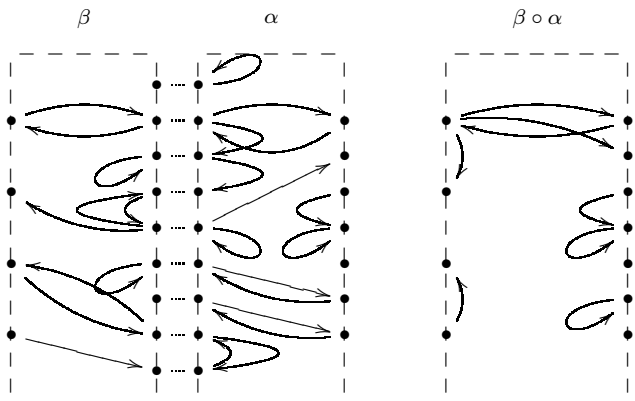

\subsection{Category of partitioned binary relations}\label{s2.3}

A principal observation is the following:

\begin{proposition}\label{thm1}
Composition $\circ$ defined above is associative.
\end{proposition}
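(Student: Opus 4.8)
The plan is to prove associativity of $\circ$ by comparing, for a composable triple $\aleph=(\alpha,\beta,\gamma)$ on $(X,Y)$, $(Y,Z)$, $(Z,W)$, the three-fold composites $\gamma\circ(\beta\circ\alpha)$ and $(\gamma\circ\beta)\circ\alpha$ edge by edge. By definition, $(a,b)$ lies in $\gamma\circ(\beta\circ\alpha)$ (with $a,b\in X\coprod W$) if and only if there is an $(\beta\circ\alpha,\gamma)$-connected sequence joining $a$ to $b$; each edge of $\beta\circ\alpha$ appearing in such a sequence is itself witnessed by an $(\alpha,\beta)$-connected sequence. I would introduce the notion of an \emph{$\aleph$-walk}: a sequence of edges drawn from $\alpha,\beta,\gamma$ satisfying the two conditions \eqref{c1}--\eqref{c2} relative to the triple $\aleph$, i.e.\ consecutive edges lie in different PBRs and share an endpoint in $X_{\coprod}=X\coprod Y\coprod Z\coprod W$. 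The key claim is then the symmetric-looking statement:

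\begin{quote}
$(a,b)\in\gamma\circ(\beta\circ\alpha)$ $\iff$ there is an $\aleph$-walk connecting $a$ to $b$ $\iff$ $(a,b)\in(\gamma\circ\beta)\circ\alpha$,
\end{quote}
from which associativity is immediate. First I would prove the forward direction of the left equivalence: given an $(\beta\circ\alpha,\gamma)$-connected sequence from $a$ to $b$, I splice in, at each occurrence of a $\beta\circ\alpha$-edge $(c,d)$, the witnessing $(\alpha,\beta)$-connected sequence from $c$ to $d$; the resulting sequence of $\alpha$-, $\beta$- and $\gamma$-edges visibly satisfies \eqref{c2}, and one checks that the ``no two successive edges in the same PBR'' condition is preserved --- the only place to worry is at the seams where a spliced-in $\beta$-edge meets a neighbouring $\gamma$-edge, or an $\alpha$-edge meets a neighbouring $\gamma$-edge, and in all these cases the two edges lie in distinct PBRs so \eqref{c1} holds. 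Conversely, given an $\aleph$-walk from $a$ to $b$, I would group it into maximal runs: when the walk first steps from the $\{\alpha,\beta\}$-world into $\gamma$ (or vice versa) it must pass through a vertex of $Z$, and I cut the walk at exactly those $Z$-vertices; each maximal sub-block using only $\alpha,\beta$ is an $(\alpha,\beta)$-connected sequence witnessing a $\beta\circ\alpha$-edge between its endpoints (which lie in $X\coprod Z$), and stringing these $\beta\circ\alpha$-edges together with the intervening $\gamma$-edges yields a $(\beta\circ\alpha,\gamma)$-connected sequence from $a$ to $b$. By the left--right symmetry of the whole setup (swap the roles of $\alpha$/$\gamma$ and $X$/$W$), the identical argument with $\gamma\circ\beta$ playing the role of the inner composite gives the right equivalence, and associativity follows.

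The main obstacle I anticipate is purely bookkeeping: verifying that condition \eqref{c1} really is preserved under splicing and under cutting, since \eqref{c1} is a \emph{local} condition about consecutive edges and splicing inserts whole blocks. The subtle point is that after inserting an $(\alpha,\beta)$-sequence for a $\beta\circ\alpha$-edge $(c,d)$, the new neighbours of the inserted block are $\gamma$-edges (by \eqref{c1} for the original $(\beta\circ\alpha,\gamma)$-sequence), while the first and last edges of the inserted block are $\alpha$- or $\beta$-edges --- never $\gamma$-edges --- so no new violation is created. One must also confirm the degenerate cases: a $\beta\circ\alpha$-edge might itself be a single $\alpha$-edge or a single $\beta$-edge (the case $m=1$ remarked on after the definition), in which case ``splicing'' just substitutes one edge for one edge and nothing changes. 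I would also note that the shared-endpoint condition \eqref{c2} at a seam forces the common vertex to lie in $Y$ (between the inserted block and itself) or in $Z$ (between the block and an adjacent $\gamma$-edge), which is exactly what makes the cutting step in the converse direction well-defined --- the cut points are canonically the $Z$-vertices, so the decomposition of an $\aleph$-walk into $\beta\circ\alpha$-pieces plus $\gamma$-edges is forced, and symmetrically the cut at $Y$-vertices gives the $(\gamma\circ\beta)$-description. Once this local-to-global bridging is set up carefully, everything else is a short induction on the length of the walks.
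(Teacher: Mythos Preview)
Your proof is correct and is essentially the same as the paper's: both pass through $\aleph$-connected sequences (your ``$\aleph$-walks'') as the intermediate symmetric object, expanding composite edges by splicing in witnessing $(\alpha,\beta)$-sequences and contracting back by grouping maximal runs. The only cosmetic difference is that the paper proves the single implication $\gamma\circ(\beta\circ\alpha)\subseteq(\gamma\circ\beta)\circ\alpha$ directly (splicing in $(\alpha,\beta)$-sequences, then collapsing maximal $\{\beta,\gamma\}$-runs) and invokes symmetry for the reverse, whereas you phrase it as a two-sided characterization via the $\aleph$-walk and invoke symmetry at that level.
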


\begin{proof}
Let $\alpha$ be a PBR on $(X,Y)$, $\beta$ be a PBR on $(Y,Z)$,
and $\gamma$ be a PBR on $(Z,U)$. Set $\aleph:=(\alpha,\beta,\gamma)$,
$\xi:=\beta\circ\alpha$
and $\zeta:=\gamma\circ\beta$. To prove our theorem we have to check
that $(a,b)\in \gamma\circ\xi$ implies $(a,b)\in \zeta\circ\alpha$
for every $(a,b)\in (X\coprod Z)^2$ and vice versa. We prove the 
first claim, the second one is proved similarly.

Let $(a,b)\in \gamma\circ\xi$ for some $(a,b)\in (X\coprod Z)^2$.
Then there is a $(\xi,\gamma)$-connected sequence 
$(a_1,b_1),(a_2,b_2),\cdots,(a_k,b_k)$ connecting
$a$ to $b$. From this $(\xi,\gamma)$-connected sequence create a new
sequence of edges by replacing every edge $(a_i,b_i)\in\xi$ in this sequence 
by an $(\alpha,\beta)$-connected sequence connecting $a_i$ to $b_i$
(such a sequence exists by definition of composition, but it is not 
necessarily unique). By construction, the obtained sequence will be
$\aleph$-connected.

Consider now all maximal consecutive subsequences of this sequence,
containing only edges from $\beta$ and $\gamma$. By maximality,
each such subsequences is both preceded and followed by an edge 
from $\alpha$, if any. From the $\aleph$-connectedness of the
original sequence it follows that
any such subsequence is a $(\beta,\gamma)$-connected sequence 
connecting its first element to its last element. 
Construct a new sequence by replacing each such
maximal $(\beta,\gamma)$-connected subsequence by the
pair of elements which this subsequence connects. 
This pair of elements gives an edge in $\zeta$ by definition. As a result, 
we obtain an $(\alpha,\zeta)$-connected sequence connecting $a$ to $b$.
Hence $(a,b)\in\zeta\circ\alpha$. The claim follows.
\end{proof}

For a finite set $X$ define the PBR $\varepsilon_X$ on $(X,X)$
as the one containing all edges $(x^{(d)},x^{(c)})$ and $(x^{(c)},x^{(d)})$
for all $x\in X$. The diagram of the PBR $\varepsilon_X$ is shown 
in Figure~\ref{fig3}.

\begin{figure}
{\tiny
\begin{displaymath}
\xymatrixrowsep{3mm}
\xymatrixcolsep{1.5mm}
\xymatrix{
&&\varepsilon_X&&&&&&&&\overline{\varepsilon}_X&&
&&&&&&\hat{\varepsilon}_X&&\\
X&&&&X&&&&X&&&&X&&&&X&&&&X\\
\ar@{--}[dddddd]\ar@{--}[rrrr]&&&&\ar@{--}[dddddd]
&&&&\ar@{--}[rrrr]\ar@{--}[dddddd]&&&&\ar@{--}[dddddd]
&&&&\ar@{--}[rrrr]\ar@{--}[dddddd]&&&&\ar@{--}[dddddd]\\
\bullet\ar@/^/[rrrr]&&&&\bullet\ar@/^/[llll]&&&&
\bullet\ar@/^/[rrrr]\ar@(r,ur)[]&&&&\bullet\ar@/^/[llll]\ar@(l,dl)[]
&&&&\bullet&&&&\bullet\ar[llll]\\
\bullet\ar@/^/[rrrr]&&&&\bullet\ar@/^/[llll]&&&&
\bullet\ar@/^/[rrrr]\ar@(r,ur)[]&&&&\bullet\ar@/^/[llll]\ar@(l,dl)[]
&&&&\bullet&&&&\bullet\ar[llll]\\
&&\dots&&&&&&&&\dots&&&&&&&&\dots\\
\bullet\ar@/^/[rrrr]&&&&\bullet\ar@/^/[llll]&&&&
\bullet\ar@/^/[rrrr]\ar@(r,ur)[]&&&&\bullet\ar@/^/[llll]\ar@(l,dl)[]
&&&&\bullet&&&&\bullet\ar[llll]\\
\bullet\ar@/^/[rrrr]&&&&\bullet\ar@/^/[llll]&&&&
\bullet\ar@/^/[rrrr]\ar@(r,ur)[]&&&&\bullet\ar@/^/[llll]\ar@(l,dl)[]
&&&&\bullet&&&&\bullet\ar[llll]\\
\ar@{--}[rrrr]&&&&&&&&\ar@{--}[rrrr]&&&&&&&&\ar@{--}[rrrr]&&&&
}
\end{displaymath}
\caption{The partitioned binary relations $\varepsilon_X$,
$\overline{\varepsilon}_X$ and $\hat{\varepsilon}_X$}
\label{fig3}
}
\end{figure}
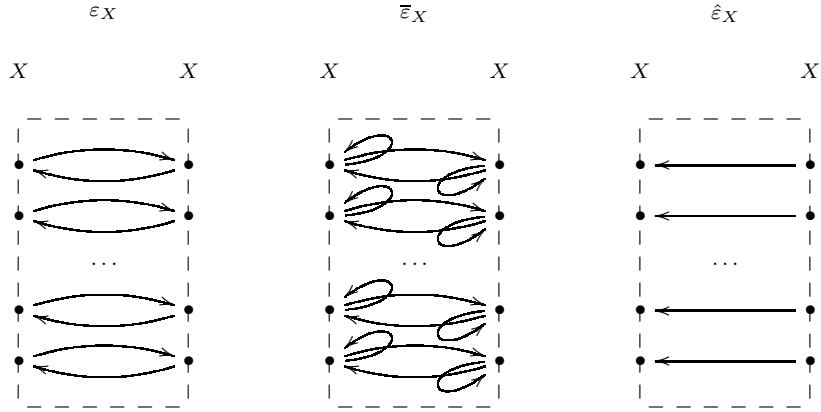

\begin{proposition}\label{prop2}
The PBR $\varepsilon_X$ is the identity morphism for $X$ with respect to 
$\circ$, that is $\varepsilon_X\circ \alpha=\alpha$ for any
PBR $\alpha$ on $(Y,X)$, and $\beta\circ \varepsilon_X=\beta$ 
for any PBR $\beta$ on $(X,Y)$.
\end{proposition}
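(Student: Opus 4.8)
The plan is to verify the two stated identities directly from the definition of $\circ$. Since they are mirror images of each other (swapping the roles of domain and codomain), I would write out $\varepsilon_X\circ\alpha=\alpha$ in full and then deduce $\beta\circ\varepsilon_X=\beta$ by this left--right symmetry. Fix a PBR $\alpha$ on $(Y,X)$. In forming $\varepsilon_X\circ\alpha$ there are three copies of the relevant sets in play: the domain $Y$ of $\alpha$; the \emph{middle} copy of $X$, which is the codomain of $\alpha$ and at the same time the domain of $\varepsilon_X$; and the \emph{outer} copy of $X$, which is the codomain of $\varepsilon_X$ and hence the codomain of $\varepsilon_X\circ\alpha$. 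The one structural fact about $\varepsilon_X$ I will keep using is that its only edges are $(x^{(d)},x^{(c)})$ and $(x^{(c)},x^{(d)})$ for $x\in X$; consequently every $\varepsilon_X$-edge occurring in an $(\alpha,\varepsilon_X)$-connected sequence joins the middle copy of some $x$ to the outer copy of the same $x$, and in particular touches no element of $Y$ and no element of $\alpha$ lying in the outer copy.

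\emph{The inclusion $\alpha\subseteq\varepsilon_X\circ\alpha$.} Given an edge $(p,q)\in\alpha$, I form an $(\alpha,\varepsilon_X)$-connected sequence out of the single edge $(p,q)$ by prepending the $\varepsilon_X$-edge from the outer to the middle copy of $p$ in case $p\in X$, and appending the $\varepsilon_X$-edge from the middle to the outer copy of $q$ in case $q\in X$. Conditions \eqref{c1} and \eqref{c2} hold by construction, and the resulting sequence connects the images of $p$ and $q$ in the domain/codomain of $\varepsilon_X\circ\alpha$; hence $(p,q)\in\varepsilon_X\circ\alpha$.

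\emph{The inclusion $\varepsilon_X\circ\alpha\subseteq\alpha$.} Let $\xi=(a_1,b_1),\dots,(a_m,b_m)$ be an $(\alpha,\varepsilon_X)$-connected sequence connecting $a$ to $b$, with $a,b$ in the domain/codomain of $\varepsilon_X\circ\alpha$. The key local observation is that $\xi$ cannot contain an $\varepsilon_X$-edge that is both immediately preceded and immediately followed by an edge of $\alpha$: by \eqref{c2} such an $\varepsilon_X$-edge would share an endpoint with an endpoint of an $\alpha$-edge, but one of its endpoints lies in the outer copy of $X$, which contains no element of $\alpha$. Combined with the alternation condition \eqref{c1}, this forces $\xi$ to have one of the shapes $(\alpha)$, $(\varepsilon_X,\alpha)$, $(\alpha,\varepsilon_X)$, $(\varepsilon_X,\alpha,\varepsilon_X)$: it contains exactly one $\alpha$-edge, because an $\varepsilon_X$-only sequence must, by \eqref{c1}, be a single edge, which has an endpoint in the middle copy of $X$ and therefore cannot connect two elements of the domain/codomain of $\varepsilon_X\circ\alpha$; and that $\alpha$-edge is preceded by at most one further edge and followed by at most one further edge --- each necessarily from $\varepsilon_X$ --- since two edges on one side would, via \eqref{c1}, produce a second $\alpha$-edge. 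In each of the four shapes the boundary $\varepsilon_X$-edges merely transport an endpoint between the outer and middle copies of $X$, so the unique $\alpha$-edge, read with the outer copy of $X$ identified with the middle one, is precisely $(a,b)$; hence $(a,b)\in\alpha$.

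The only part that needs genuine attention is the bookkeeping of the (up to) three copies of $X$ together with the ``no interior $\varepsilon_X$-edge'' observation; after that, the enumeration of the possible shapes of $\xi$ and the case-by-case identification of the connected pair are routine. The remaining identity $\beta\circ\varepsilon_X=\beta$ is then obtained by running the same argument with left and right interchanged.
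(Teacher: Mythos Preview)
Your proof is correct and follows exactly the approach the paper takes: the paper's proof is the single sentence ``This is a straightforward computation,'' and what you have written is precisely that computation carried out in detail. Your key observation that no $\varepsilon_X$-edge can be sandwiched between two $\alpha$-edges (forcing the four possible shapes of an $(\alpha,\varepsilon_X)$-connected sequence) is the right way to unpack the verification, and nothing is missing.
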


\begin{proof}
This is a straightforward computation.
\end{proof}

Adding all loops to the PBR $\varepsilon_X$ one obtains the 
idempotent PBR $\overline{\varepsilon}_X$ (see Figure~\ref{fig3}).
Deleting all right arrows from the PBR $\varepsilon_X$ one obtains the 
idempotent PBR $\hat{\varepsilon}_X$ (see Figure~\ref{fig3}).
The PBRs $\overline{\varepsilon}_X$ and $\hat{\varepsilon}_X$ will appear 
as identity morphisms for certain categorical substructures later on.

Define the category $\mathfrak{PB}$ of partitioned binary relations
in the following way. Firstly: objects of $\mathfrak{PB}$ are finite sets; 
for $X,Y\in \mathfrak{PB}$ the morphism set $\mathfrak{PB}(X,Y)$ is 
the set of all PBRs on $(X,Y)$; the composition 
$\mathfrak{PB}(Y,Z)\times \mathfrak{PB}(X,Y)\to
\mathfrak{PB}(X,Z)$ is given by $\circ$; for 
$X\in \mathfrak{PB}$ the identity morphism for $X$
is $\varepsilon_X$. Then, from 
Propositions~\ref{thm1} and \ref{prop2} we obtain:

\begin{theorem}\label{thm99}
The construct $\mathfrak{PB}$ above is a category. 
\end{theorem}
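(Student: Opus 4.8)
The plan is essentially to assemble the pieces that have already been established. To verify that $\mathfrak{PB}$ is a category, one must check exactly three things: that composition is well-defined (lands in the correct morphism set), that composition is associative, and that the $\varepsilon_X$ serve as two-sided identities. The first of these is immediate from the construction in Subsection~\ref{s2.2}: given a PBR $\alpha$ on $(X,Y)$ and a PBR $\beta$ on $(Y,Z)$, the relation $\beta\circ\alpha$ was \emph{defined} to be a PBR on $(X,Z)$, so $\circ$ indeed maps $\mathfrak{PB}(Y,Z)\times\mathfrak{PB}(X,Y)$ into $\mathfrak{PB}(X,Z)$. One should perhaps remark that composable sequences exist precisely when $\mathrm{Codom}(\alpha)=\mathrm{Dom}(\beta)$, which matches the categorical convention on composable morphisms.

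Next I would invoke Proposition~\ref{thm1} for associativity: for PBRs $\alpha$ on $(X,Y)$, $\beta$ on $(Y,Z)$, $\gamma$ on $(Z,U)$, we have $\gamma\circ(\beta\circ\alpha)=(\gamma\circ\beta)\circ\alpha$. Then I would invoke Proposition~\ref{prop2}: $\varepsilon_X\circ\alpha=\alpha$ for any PBR $\alpha$ on $(Y,X)$, and $\beta\circ\varepsilon_X=\beta$ for any PBR $\beta$ on $(X,Y)$, so $\varepsilon_X\in\mathfrak{PB}(X,X)$ is a two-sided identity at the object $X$. Since the objects form a (proper) class, the hom-sets $\mathfrak{PB}(X,Y)$ are genuine sets (being finite, of cardinality $2^{(|X|+|Y|)^2}$), composition is a set map with the required source and target, it is associative, and identities exist, all the axioms of a category are satisfied.

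The proof is therefore just a one-line bookkeeping argument, and there is no real obstacle remaining at this point — all the content was already absorbed into Propositions~\ref{thm1} and \ref{prop2}, with Proposition~\ref{thm1} (associativity) being the substantive result. The only thing I would be mildly careful about is making sure the statement of Proposition~\ref{prop2} is applied with the hom-sets oriented correctly (domain on the right, codomain on the left, as the paper's diagrams indicate), so that $\varepsilon_X$ is a left identity for morphisms \emph{into} $X$ and a right identity for morphisms \emph{out of} $X$; this is purely a matter of matching conventions and involves no new argument. Hence the write-up is simply: ``This follows immediately by combining Propositions~\ref{thm1} and \ref{prop2} with the fact that composition of PBRs is, by construction, well-defined on the appropriate hom-sets.''
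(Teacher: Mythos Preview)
Your proposal is correct and matches the paper's approach exactly: the paper states the theorem as an immediate consequence of Propositions~\ref{thm1} and~\ref{prop2}, without even a separate proof environment. Your write-up is simply a more explicit unpacking of that one-line inference.
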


\subsection{Tensor product and duality}\label{s2.5}

The category $\mathfrak{PB}$ has a natural monoidal structure 
in which the tensor product is given on objects by the disjoint 
union and on morphisms by drawing diagrams next to each other as 
shown in Figure~\ref{fig4}.

\begin{figure}
{\tiny
\begin{displaymath}
\xymatrixrowsep{1mm}
\xymatrixcolsep{1.5mm}
\xymatrix{
\ar@{--}[rrrr]\ar@{--}[dddd]&&&&\ar@{--}[dddd]
&\qquad\qquad&&&&&\\
\bullet\ar@/_/[rrrrd]&&&&\bullet\ar@/_2pc/[dd]
&&\ar@{--}[rrrr]\ar@{--}[ddddddddd]&&&&\ar@{--}[ddddddddd]\\
&&&&\bullet\ar@/_/[llllu]&&\bullet\ar@/_/[rrrrd]&&&&\bullet\ar@/_2pc/[dd]\\
\bullet\ar@(r,dr)[]&&&&\bullet&&&&&&\bullet\ar@/_/[llllu]\\
\ar@{--}[rrrr]&&&&&&\bullet\ar@(r,dr)[]&&&&\bullet\\
&&\otimes&&&=&\ar@{--}[rrrr]&&&&\\
\ar@{--}[rrrr]\ar@{--}[ddddd]&&&&\ar@{--}[ddddd]
&&\bullet\ar[rrrr]&&&&\bullet\ar[lllld]\\
\bullet\ar[rrrr]&&&&\bullet\ar[lllld]&&\bullet\ar[rrrr]&&&&
\bullet\ar@/_2pc/[d]\\
\bullet\ar[rrrr]&&&&\bullet\ar@/_2pc/[d]&&\bullet\ar[rrrruu]&&&&\bullet\\
\bullet\ar[rrrruu]&&&&\bullet&&\bullet&&&&\bullet\ar@(l,dl)[]\\
\bullet&&&&\bullet\ar@(l,dl)[]&&\ar@{--}[rrrr]&&&&\\
\ar@{--}[rrrr]&&&&&&&&&&\\
}
\end{displaymath}
\caption{Tensor product}
\label{fig4}
}
\end{figure}
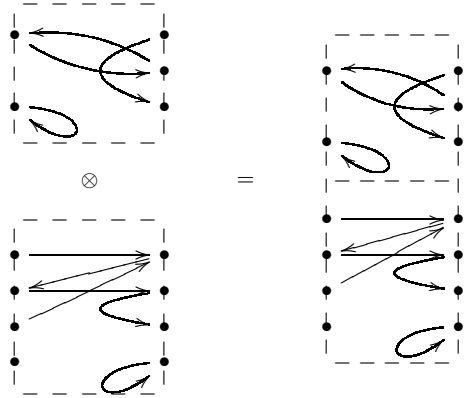

The category $\mathfrak{PB}$ has a natural involution
(that is a contravariant object preserving anti-automorphism), which we
will denote by $\star$, given by taking the mirror image of the 
diagram with respect to a vertical mirror as shown in Figure~\ref{fig5}.

\begin{figure}
{\tiny
\begin{displaymath}
\xymatrixrowsep{1mm}
\xymatrixcolsep{1.5mm}
\xymatrix{
&&\alpha&&&\qquad&&&\alpha^{\star}&&\\
\ar@{--}[dddddddddd]\ar@{--}[rrrr]&&&&\ar@{--}[dddddddddd]&
&\ar@{--}[dddddddddd]\ar@{--}[rrrr]&&&&\ar@{--}[dddddddddd]\\
\bullet\ar@(r,ur)[]\ar@/^/[dd]&&&&&&&&&&\bullet\ar@(l,ul)[]\ar@/_/[dd]\\
\bullet&&&&\bullet&&\bullet&&&&\bullet\\
\bullet\ar@(r,dr)[]\ar@/_2pc/[uu]&&&&&&&&&&\bullet\ar@(l,dl)[]\ar@/^2pc/[uu]\\
\bullet&&&&\bullet\ar@/^/[lllld]&&\bullet\ar@/^/[rrrrd]&&&&\bullet\\
\bullet\ar[rrrruuu]\ar[rrrru]&&&&&&&&&&\bullet\ar[lllluuu]\ar[llllu]\\
\bullet&&&&\bullet\ar[llllddd]\ar@(l,ul)[]&&
\bullet\ar[rrrrddd]\ar@(r,ur)[]&&&&\bullet\\
\bullet&&&&&&&&&&\bullet\\
\bullet\ar@(r,ur)[]&&&&\bullet\ar@/^/[uu]\ar[uullll]&&
\bullet\ar@/_/[uu]\ar[uurrrr]&&&&\bullet\ar@(l,ul)[]\\
\bullet&&&&&&&&&&\bullet\\
\ar@{--}[rrrr]&&&&&&\ar@{--}[rrrr]&&&&
}
\end{displaymath}
\caption{Anti-automorphism $\star$}
\label{fig5}
}
\end{figure}
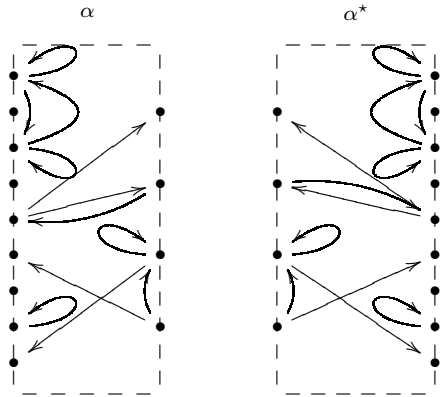

\section{Some substructures of $\mathfrak{PB}$}\label{s3}

\subsection{Binary relations, first inclusion}\label{s3.1}

Consider the category $\mathfrak{B}$ of binary relations between
finite sets (confer \cite{PW}). Objects of $\mathfrak{B}$ are finite sets. For
$X,Y\in \mathfrak{B}$, the set $\mathfrak{B}(X,Y)$ is the set of
all binary relations from $X$ to $Y$. A binary relation from
$X$ to $Y$ is a subset of $X\times Y$. Such a binary relation can
be viewed as a boolean matrix whose columns are indexed by elements
of $X$ and rows are indexed by elements of $Y$. We shall treat the 
two realizations as interchangeable. Composition of binary relations 
may then be lifted from the usual boolean multiplication of 
boolean  matrices (see, e.g. \cite{PW}). The identity morphism for 
$X$ is the equality relation (it is given by the 
identity matrix with respect to the same ordering of the two copies
of $X$). The category $\mathfrak{B}$ has a natural
involution $\bowtie$ given by matrix transposition.

Each binary relation from $X$ to $Y$ is a partitioned binary
relation from $X$ to $Y$, in other words, 
$\mathfrak{B}(X,Y)\subset \mathfrak{PB}(X,Y)$. It is 
straightforward to check that this inclusion respects composition. 
We will denote this inclusion by $\Phi_1$. Note that 
$\Phi_1$ is {\em not} a functor as it does not 
send the identity binary relation to the identity partitioned binary
relation. 

Note that $\mathfrak{B}$ has several classical subcategories, in particular,
\begin{enumerate}[(i)]
\item\label{ex01}  the subcategory of all maps;
\item\label{ex02}  the subcategory of all injective maps;
\item\label{ex03}  the subcategory of all partial injective maps;
\item\label{ex04}  the subcategory of all surjective maps;
\item\label{ex05}  the subcategory of all partial surjective maps.
\end{enumerate}
We refer the reader to \cite{KM} for details on categories
\eqref{ex03} and \eqref{ex05}. Using $\Phi_1$ we obtain 
inclusions of all these  categories into $\mathfrak{PB}$ by restriction.

The image $\Phi_1(\mathfrak{B})$ can also be understood as an 
{\em idempotent} subcategory  of  $\mathfrak{PB}$.  Let $\mathcal{C}$
be a category and $\mathbf{e}=(e_X)_{X\in\mathcal{C}}$ a fixed
collection of idempotent endomorphisms such that $e_X\in\mathcal{C}(X,X)$.
An {\em $\mathbf{e}$-subcategory} $\mathcal{D}$
of $\mathcal{C}$ is a category such that
\begin{itemize}
\item objects of $\mathcal{D}$ form a subclass
of objects of $\mathcal{C}$;
\item $\mathcal{D}(X,Y)\subset \mathcal{C}(X,Y)$ for any 
$X,Y\in\mathcal{D}$;
\item the multiplication in $\mathcal{D}$ is obtained from the one
in $\mathcal{C}$ by restriction;
\item for any $X\in\mathcal{D}$ the
morphism $e_X$ is the corresponding identity morphism for $X$.
\end{itemize}
Among all $\mathbf{e}$-subcategories of  $\mathcal{C}$ there is the 
unique maximum  one with respect to inclusions.
This category is denoted by $\mathcal{C}_{\mathbf{e}}$, it has the
same objects as $\mathcal{C}$ and for $X,Y\in \mathcal{C}$ we have
\begin{displaymath}
\mathcal{C}_{\mathbf{e}}(X,Y)=e_Y \mathcal{C}(X,Y)e_X.
\end{displaymath}

\begin{remark}\label{rem123}
{\rm 
Similarly to \cite[Section~5]{Au} one shows that
the category of $\mathcal{C}_{\mathbf{e}}$-representations
over some field $\Bbbk$ (that is functors from 
$\mathcal{C}_{\mathbf{e}}$ to $\Bbbk$-vector spaces) fully embeds 
into the category of $\mathcal{C}$-representations. 
} 
\end{remark}

For $X\in \mathfrak{B}$ recall the idempotent
PBR  $\hat{\varepsilon}_X$ defined in Subsection~\ref{s2.3}
(see Figure~\ref{fig3}). The PBR $\hat{\varepsilon}_X$ is the image of
the identity relation on $X$ under $\Phi_1$. 

\begin{proposition}\label{prop61}
For $\mathbf{e}:=(\hat{\varepsilon}_X)_{X\in \mathfrak{B}}$
we have $\Phi_1(\mathfrak{B})=\mathfrak{PB}_{\mathbf{e}}$.
\end{proposition}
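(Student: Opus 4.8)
The plan is to establish the two inclusions $\Phi_1(\mathfrak{B}) \subseteq \mathfrak{PB}_{\mathbf{e}}$ and $\mathfrak{PB}_{\mathbf{e}} \subseteq \Phi_1(\mathfrak{B})$ separately, using the explicit description $\mathfrak{PB}_{\mathbf{e}}(X,Y) = \hat{\varepsilon}_Y \, \mathfrak{PB}(X,Y) \, \hat{\varepsilon}_X$ from the maximality characterization quoted just above. First I would record precisely what a PBR in the image of $\Phi_1$ looks like as a directed graph: a binary relation $R \subseteq X \times Y$ corresponds to the PBR on $(X,Y)$ whose only edges are of the form $(x^{(d)}, y^{(c)})$ with $(x,y) \in R$; in particular it contains no loops, no ``backward'' edges $(y^{(c)}, x^{(d)})$, and no edges internal to $X$ or internal to $Y$. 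I would also recall that $\hat{\varepsilon}_X$ is the PBR on $(X,X)$ with edges exactly $(x^{(d)}, x^{(c)})$ for $x \in X$ (the identity relation, having deleted all ``right arrows'' from $\varepsilon_X$), and that it is idempotent.

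For the inclusion $\Phi_1(\mathfrak{B}) \subseteq \mathfrak{PB}_{\mathbf{e}}$, I would take an arbitrary $R \in \mathfrak{B}(X,Y)$ and show $\hat{\varepsilon}_Y \circ \Phi_1(R) \circ \hat{\varepsilon}_X = \Phi_1(R)$. This is a direct unwinding of the definition of $\circ$ via $\aleph$-connected sequences for $\aleph = (\hat{\varepsilon}_X, \Phi_1(R), \hat{\varepsilon}_Y)$: one checks that the only $\aleph$-connected sequences connect $x^{(d)}$ to $y^{(c)}$, and such a sequence exists precisely when $(x,y) \in R$, because the $\hat{\varepsilon}$-factors only allow one to ``pass through'' a $d$-vertex into the corresponding $c$-vertex in the one permitted direction. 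Equivalently, and perhaps more cleanly, I would invoke the already-established fact (Subsection~\ref{s3.1}) that $\Phi_1$ respects composition together with $\hat{\varepsilon}_X = \Phi_1(\mathrm{id}_X)$ and the fact that the equality relation is a two-sided identity in $\mathfrak{B}$; then $\hat{\varepsilon}_Y \circ \Phi_1(R) \circ \hat{\varepsilon}_X = \Phi_1(\mathrm{id}_Y \circ R \circ \mathrm{id}_X) = \Phi_1(R)$. This shows $\Phi_1(R) \in \hat{\varepsilon}_Y\,\mathfrak{PB}(X,Y)\,\hat{\varepsilon}_X$.

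For the reverse inclusion I would take an arbitrary PBR $\gamma$ on $(X,Y)$ and analyze $\hat{\varepsilon}_Y \circ \gamma \circ \hat{\varepsilon}_X$, showing that it always lies in the image of $\Phi_1$; combined with the first part this simultaneously shows that $\hat{\varepsilon}_Y \circ (-) \circ \hat{\varepsilon}_X$ restricted to $\Phi_1(\mathfrak{B})$ is the identity, so no morphisms are lost. Concretely: in any $(\hat{\varepsilon}_X, \gamma, \hat{\varepsilon}_Y)$-connected sequence connecting $a$ to $b$ with $a,b \in X \coprod Y$, condition \eqref{c1} forces alternation between the outer idempotent factors and $\gamma$, and because $\hat{\varepsilon}_X$ only has edges $x^{(d)} \to x^{(c)}$ (with the second copy of $X$ being the domain of $\gamma$) one sees that such a sequence must begin and end with edges drawn from the $\hat{\varepsilon}$-factors, hence $a \in X^{(d)}$ and $b \in Y^{(c)}$; moreover the sequence can involve at most one edge of $\gamma$, namely one from $x^{(d)}$ to $y^{(c)}$. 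Therefore $\hat{\varepsilon}_Y \circ \gamma \circ \hat{\varepsilon}_X$ has edges only of the form $(x^{(d)}, y^{(c)})$, i.e.\ it equals $\Phi_1(R)$ for the binary relation $R = \{(x,y) : (x^{(d)}, y^{(c)}) \in \gamma\}$. The main obstacle, and the step requiring genuine care rather than bookkeeping, is precisely this combinatorial analysis of which $\aleph$-connected sequences can occur once both ends are clamped by $\hat{\varepsilon}$: one must verify that loops, backward edges, and $X$-internal or $Y$-internal edges of $\gamma$ are all ``killed'' — they cannot be extended on the clamped side into a legitimate connected sequence because $\hat{\varepsilon}$ offers no edge pointing the required way. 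Once that directional analysis is in place, both inclusions follow, and the proposition is proved.
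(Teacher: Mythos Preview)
Your proposal is correct and follows essentially the same approach as the paper's own proof: both arguments verify the two inclusions by observing that $\hat{\varepsilon}_Y\circ\alpha\circ\hat{\varepsilon}_X=\alpha$ for $\alpha\in\Phi_1(\mathfrak{B})$, and that in any $(\hat{\varepsilon}_X,\beta,\hat{\varepsilon}_Y)$-connected sequence the directional constraints on the $\hat{\varepsilon}$-factors force the first edge to lie in $\hat{\varepsilon}_X$ and the last in $\hat{\varepsilon}_Y$, whence the endpoints lie in $X$ and $Y$ respectively. Your version is simply more explicit (e.g.\ the observation that exactly one $\gamma$-edge participates), but the core idea is identical.
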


\begin{proof}
We have to check that for any $X,Y\in \mathfrak{PB}$ and 
$\alpha\in \mathfrak{PB}(X,Y)$ the following is true:
$\alpha\in \Phi_1(\mathfrak{B})$ if and only if 
$\alpha=\hat{\varepsilon}_Y\circ\beta\circ\hat{\varepsilon}_X$
for some $\beta\in \mathfrak{PB}(X,Y)$. 

If $\alpha\in \Phi_1(\mathfrak{B})$, then 
$\hat{\varepsilon}_Y\circ\alpha\circ\hat{\varepsilon}_X=\alpha$.
On the other hand, let $\beta\in \mathfrak{PB}(X,Y)$,
$\alpha=\hat{\varepsilon}_Y\circ\beta\circ\hat{\varepsilon}_X$
and $(a,b)$ be an edge of $\alpha$. Let $(a_1,b_1),\dots,(a_m,b_m)$
be an $(\hat{\varepsilon}_X,\beta,\hat{\varepsilon}_Y)$-connected 
sequence connecting $a$ to $b$. From the definition
of $\hat{\varepsilon}_Y$ it follows that $(a_1,b_1)$ is an edge from
$\hat{\varepsilon}_X$. Similarly, $(a_m,b_m)$ is an edge from 
$\hat{\varepsilon}_Y$. This implies $a\in X$ and $b\in Y$.
The claim follows.
\end{proof}

\subsection{Binary relations, second inclusion}\label{s3.2}

With each binary relation $\theta$ from $X$ to $Y$ we associate
a partitioned binary relation $\Phi_2(\theta)$ on $(X,Y)$
in the following way: $\Phi_2(\theta):=\Phi_1(\theta)\cup
\Phi_1(\theta^{\bowtie})$. The effect of $\Phi_2$ on binary relations
is illustrated in Figure~\ref{fig7}. 

\begin{proposition}\label{prop4}
The map $\Phi_2$ gives rise to a faithful functor from
$\mathfrak{B}$ to $\mathfrak{PB}$.
\end{proposition}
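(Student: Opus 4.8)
The plan is to verify three things: that $\Phi_2$ sends the identity morphism of $\mathfrak{B}$ to the identity morphism of $\mathfrak{PB}$, that $\Phi_2$ respects composition, and that $\Phi_2$ is injective on morphism sets. Faithfulness (injectivity on each $\mathfrak{B}(X,Y)$) is immediate: since $\theta\subseteq X\times Y$ can be recovered from $\Phi_2(\theta)$ by keeping only the edges going from $\mathrm{Dom}$ to $\mathrm{Codom}$, distinct relations yield distinct PBRs. The identity check is also quick: the identity relation on $X$ has $\Phi_1$-image $\hat\varepsilon_X$ (all right arrows $x^{(d)}\to x^{(c)}$), and $\Phi_1$ applied to its transpose gives all left arrows $x^{(c)}\to x^{(d)}$, so $\Phi_2$ of the identity relation is exactly $\varepsilon_X$, the identity morphism of $\mathfrak{PB}$ by Proposition~\ref{prop2}.

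The substantive point is compatibility with composition: given $\theta$ from $X$ to $Y$ and $\sigma$ from $Y$ to $Z$, we must show $\Phi_2(\sigma\circ\theta)=\Phi_2(\sigma)\circ\Phi_2(\theta)$, where the left $\circ$ is boolean relation composition and the right $\circ$ is PBR composition. I would argue by analysing $(\Phi_2(\theta),\Phi_2(\sigma))$-connected sequences. Write $\alpha=\Phi_2(\theta)$, $\beta=\Phi_2(\sigma)$. Each edge of $\alpha$ is either a ``forward'' edge $x^{(d)}\to y^{(c)}$ with $(x,y)\in\theta$ or a ``backward'' edge $y^{(c)}\to x^{(d)}$ with $(x,y)\in\theta$; similarly for $\beta$ over $\sigma$. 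Because $X$, $Y$, $Z$ are pairwise disjoint (the middle set $Y$ is shared but an $\alpha$-edge and a $\beta$-edge that meet must meet inside $Y$), an $\aleph$-connected sequence with $\aleph=(\alpha,\beta)$ alternates between the two PBRs and its vertices march back and forth through $Y$. The key combinatorial observation is that any connected sequence from $a\in X\sqcup Z$ to $b\in X\sqcup Z$ can be ``reduced'': whenever two consecutive edges backtrack along the same relational step (a forward edge of $\alpha$ immediately followed by the backward edge of $\alpha$ over the same pair, which is impossible since consecutive edges must lie in different PBRs — so instead the cancellation happens with a pair like forward-$\alpha$, forward-$\beta$, backward-$\beta$), one can shorten it. After reduction one is left with a genuinely monotone path: either a single forward step $x^{(d)}\to y^{(c)}\to z^{(c)}$ witnessing $(x,z)\in\sigma\circ\theta$, or its reverse, or a trivial length-one reduction that cannot reach from $X$ to $Z$. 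I would make this precise by showing: (a) if $(x,z)\in\sigma\circ\theta$ then picking $y$ with $(x,y)\in\theta$, $(y,z)\in\sigma$ gives the connected sequence $x^{(d)}\to y^{(c)}$ (in $\alpha$), $y^{(c)}\to z^{(c)}$ (in $\beta$), so $(x^{(d)},z^{(c)})\in\beta\circ\alpha$, and symmetrically $(z^{(c)},x^{(d)})\in\beta\circ\alpha$; conversely (b) any $(\alpha,\beta)$-connected sequence from an $X$-vertex to a $Z$-vertex must, by the alternation and the forward/backward structure, have first edge forward in $\alpha$ and, tracing through, produce intermediate $Y$-elements giving a chain in $\theta$ then $\sigma$, hence an element of $\sigma\circ\theta$; and (c) no $(\alpha,\beta)$-connected sequence joins two $X$-vertices or two $Z$-vertices, and none joins an $X$-vertex to a $Z$-vertex in the ``wrong'' orientation beyond what $\Phi_2(\sigma\circ\theta)$ already contains. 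Point (b)/(c) — controlling all connected sequences, not just the short ones — is the main obstacle, and the clean way to handle it is to observe that in $\alpha$ and $\beta$ there are no loops and no edges internal to $X$, $Y$ or $Z$ (every edge crosses between the domain and codomain sides), so a connected sequence is forced to be an alternating zigzag, and one can track the ``side'' (domain vs codomain of the relevant relation) at each vertex to see that reaching from $X$ to $Z$ forces exactly one net forward traversal of $\theta$ followed by one of $\sigma$.

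Having established these three facts, the proposition follows: $\Phi_2$ is a map on objects (identity) and morphisms that preserves identities and composition, hence a functor, and it is injective on hom-sets, hence faithful. I expect the write-up to spend essentially all its length on step (b)/(c); steps (a), the identity check, and faithfulness are each a line or two.
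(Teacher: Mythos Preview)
Your approach is correct and proceeds by direct verification of the functor axioms, whereas the paper takes a structural route: it defines the subcategory $\mathfrak{PPB}\subset\mathfrak{PB}$ of \emph{pure} PBRs (those in which every edge joins a domain element to a codomain element), proves that $\mathfrak{PPB}$ is isomorphic to a ``double'' $\mathfrak{B}^{\ltimes}$ of $\mathfrak{B}$ (morphisms are pairs $(\beta,\gamma)\in\mathfrak{B}(X,Y)\times\mathfrak{B}^{\mathrm{op}}(X,Y)$ composed coordinatewise), and then observes that $\Phi_2$ is simply the diagonal embedding $\theta\mapsto(\theta,\theta^{\bowtie})$, which is evidently a faithful functor. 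The paper's argument buys a broader picture --- it identifies the whole pure-PBR subcategory, of which $\Phi_2(\mathfrak{B})$ is the diagonal --- at the cost of developing the Section~\ref{s5} machinery first. Your direct argument is self-contained and more elementary.

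One remark: your step (b)/(c) is actually simpler than your sketch suggests. Because every edge of $\alpha=\Phi_2(\theta)$ runs between $X$ and $Y$ and every edge of $\beta=\Phi_2(\sigma)$ runs between $Y$ and $Z$, an $(\alpha,\beta)$-connected sequence starting at some $x\in X$ is \emph{forced} to take the form $x\to y_1$ (in $\alpha$) followed by $y_1\to z_1$ (in $\beta$), and then it terminates: the next edge would have to lie in $\alpha$ and start at $z_1\in Z$, but $\alpha$ has no edges touching $Z$. Symmetrically for sequences starting in $Z$. Hence every connected sequence between elements of $X\coprod Z$ has length exactly two, there is no zigzagging or reduction to analyse, and the equality $\Phi_2(\sigma)\circ\Phi_2(\theta)=\Phi_2(\sigma\circ\theta)$ falls out immediately.
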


\begin{proof}
A proof will be given in Remark~\ref{rem73n}.
\end{proof}

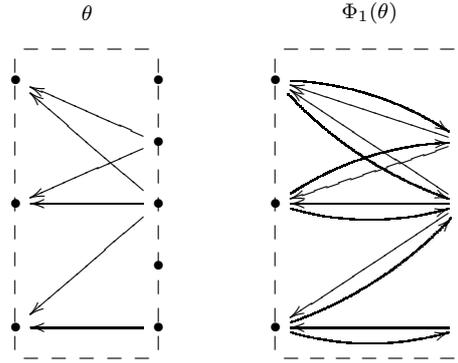
\begin{figure}
{\tiny
\begin{displaymath}
\xymatrixrowsep{1mm}
\xymatrixcolsep{1.5mm}
\xymatrix{
&&\theta&&&\qquad&&&\Phi_1(\theta)&&\\
\ar@{--}[dddddddddd]\ar@{--}[rrrr]&&&&\ar@{--}[dddddddddd]&
&\ar@{--}[dddddddddd]\ar@{--}[rrrr]&&&&\ar@{--}[dddddddddd]\\
\bullet&&&&\bullet&&\bullet\ar@/^/[rrrrdd]\ar@/_/[rrrrdddd]&&&&\bullet\\
&&&&&&&&&&\\
&&&&\bullet\ar[lllluu]\ar[lllldd]&&&&&&
\bullet\ar[lllluu]\ar[lllldd]\\
&&&&&&&&&&\\
\bullet&&&&\bullet\ar[lllluuuu]\ar[lllldddd]\ar[llll]&&
\bullet\ar@/_/[rrrr]\ar@/^/[rrrruu]&&&&
\bullet\ar[llll]\ar[lllluuuu]\ar[lllldddd]\\
&&&&&&&&&&\\
&&&&\bullet&&&&&&\bullet\\
&&&&&&&&&&\\
\bullet&&&&\bullet\ar[llll]&&
\bullet\ar@/_/[rrrr]\ar@/_/[rrrruuuu]&&&&\bullet\ar[llll]\\
\ar@{--}[rrrr]&&&&&&\ar@{--}[rrrr]&&&&
}
\end{displaymath}
\caption{Injection $\Phi_2$}
\label{fig7}
}
\end{figure}

Similarly to Subsection~\ref{s3.1}, using $\Phi_2$ we realize
categories of various types of maps as subcategories of 
$\mathfrak{PB}$.

\subsection{Partition category}\label{s3.3}

Denote by $\mathfrak{P}$ the {\em partition} category, defined as follows
(see \cite{Mar1}): Objects of $\mathfrak{P}$ are finite sets.
For $X,Y\in \mathfrak{P}$ the set $\mathfrak{P}(X,Y)$ is the set of
all partitions of $X\coprod Y$ into a disjoint union of subsets
(called {\em parts}). 
For $\alpha\in \mathfrak{P}(X,Y)$ and $\beta\in \mathfrak{P}(Y,Z)$
the composition $\beta\circ\alpha$ is defined as the unique
partition in $\mathfrak{P}(X,Z)$ such that for any $a,b\in X\coprod Z$
the elements $a$ and $b$ belong to the same part of the partition
$\beta\circ\alpha$ if and only if for some $k\in\mathbb{N}_0$ there is 
a sequence $a=a_0,a_1,\dots,a_k=b$ of elements from $X\coprod Y\coprod Z$
such that for every $i=0,1,\dots,k-1$ the elements $a_i$ and $a_{i+1}$
belong to the same part of either $\alpha$ or $\beta$. The identity
morphism $\pi_X$ of $\mathfrak{P}(X,X)$ is the partition of 
$X\coprod X=X\cup X'$,
where $X':=\{x',x\in X\}$, consisting of parts $\{x,x'\}$, $x\in X$.

For $X\in \mathfrak{P}$ set $\Psi(X)=X\in \mathfrak{PB}$. For
$\alpha\in \mathfrak{P}(X,Y)$ denote by $\Psi(\alpha)$ the unique
PBR in $\mathfrak{PB}(X,Y)$ such that for every $a,b\in X\coprod Y$
we have $(a,b)\in \Psi(\alpha)$ if and only if $a$ and $b$ belong
to the same part of $\alpha$. Alternatively, we can say that the binary
relation $\Psi(\alpha)$ is obtained by considering the partition 
$\alpha$ of $X\coprod Y$ as an equivalence relation on 
$X\coprod Y$. Note that $\Psi(\pi_X)=\overline{\varepsilon}_X$.

A partition is usually drawn as a diagram similarly to a diagram of PBR.
Elements of the diagram are connected such that the connected components
correspond to parts of the partition (note that a diagram of a partition
is not uniquely defined). An example of how $\Psi$ works 
is given in Figure~\ref{fig8} (note the use of double arrows there
to simplify the picture). It is straightforward to verify that
for any $\alpha\in\mathfrak{P}(X,Y)$ and $\beta\in\mathfrak{P}(Y,Z)$
we have  $\Psi(\beta\circ\alpha)=\Psi(\beta)\circ\Psi(\alpha)$.

Note that $\Psi$ is a not a functor as it does not map identity 
morphisms to identity morphisms. The image of
$\Psi$ does not coincide with the idempotent subcategory of 
$\mathfrak{PB}$ generated by $\mathbf{e}=
\{\overline{\varepsilon}_X, X\in \mathfrak{PB}\}$.
The latter idempotent subcategory is larger. One can readily see that 
the subset of reflexive, transitive relations in $\mathfrak{PB}$ is 
closed under composition, and that this is $\mathfrak{PB}_{\mathbf{e}}$.

\begin{figure}
{\tiny
\begin{displaymath}
\xymatrixrowsep{1mm}
\xymatrixcolsep{1.5mm}
\xymatrix{
&&\alpha&&&\qquad&&&\Psi(\alpha)&&\\
\ar@{--}[dddddddddd]\ar@{--}[rrrr]&&&&\ar@{--}[dddddddddd]&
&\ar@{--}[dddddddddd]\ar@{--}[rrrr]&&&&\ar@{--}[dddddddddd]\\
\bullet\ar@{-}[rrrr]&&&&\bullet&&\bullet\ar@(r,ur)[]\ar@{<->}[rrrr]
&&&&\bullet\ar@(l,ul)[]\\
&&&&&&&&&&\\
&&&&\bullet&&&&&&
\bullet\ar@(l,ul)[]\\
&&&&&&&&&&\\
\bullet&&&&\bullet\ar@{-}@/^1pc/[uu]&&
\bullet\ar@(r,ur)[]&&&&
\bullet\ar@(l,ul)[]\ar@/^1.5pc/@{<->}[uu]\\
&&&&&&&&&&\\
&&&&\bullet&&&&&&\bullet\ar@(l,ul)[]\\
&&&&&&&&&&\\
\bullet\ar@{-}[rrrruuuu]&&&&\bullet\ar@{-}@/^1pc/[uu]&&
\bullet\ar@(r,dr)[]\ar@/_/@{<->}[rrrruuuu]\ar@{<->}[rrrruuuuuu]
&&&&\bullet\ar@(l,dl)[]\ar@/^1pc/@{<->}[uu]\\
\ar@{--}[rrrr]&&&&&&\ar@{--}[rrrr]&&&&
}
\end{displaymath}
\caption{Inclusion $\Psi$}
\label{fig8}
}
\end{figure}
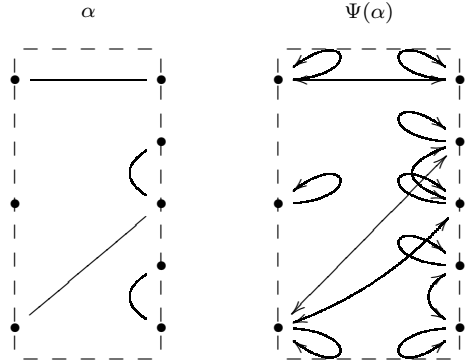

Partition category contains many classical subcategories, for
example, Brauer category (\cite{Br}), partial (alias rook)
Brauer category (\cite{Maz1}) and Temperley-Lieb category 
(\cite{TL}). The map $\Psi$ embeds them into $\mathfrak{PB}$ by 
restriction.

\section{Deformation}\label{s4}

In this section we establish existence of a $1$-parameter
deformation of the category $\mathfrak{PB}$.

\subsection{Frothy elements, edges and alternating cycles}\label{s4.1}

Let $\aleph=(\alpha_1,\alpha_2,\alpha_3,\dots,\alpha_k)$ be a {\em composable}
sequence of PBRs (see Subsection~\ref{s2.2}). Let $X_i$, $i=1,2,\dots,k+1$ 
be as in Subsection~\ref{s2.2}.  An edge $(a,b)\in\alpha_i$, 
$i=1,2,\dots,k$, is said to be $\aleph$-{\em frothy} provided that it does not
occur in any $\aleph$-connected sequence connecting two
(not necessarily distinct) elements from $X_1\coprod X_{k+1}$.
For example, in the case $k=2$ shown in Figure~\ref{fig9} all frothy
edges are drawn doubled.

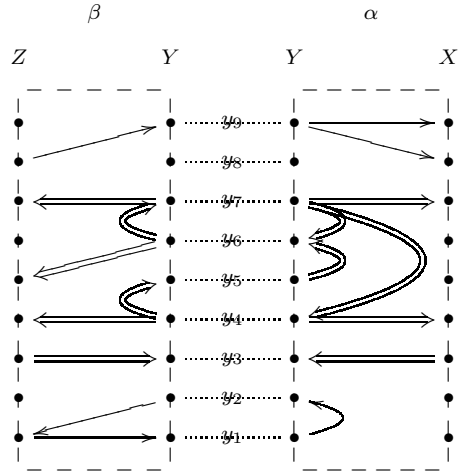
\begin{figure}
{\tiny
\begin{displaymath}
\xymatrixrowsep{1mm}
\xymatrixcolsep{1.5mm}
\xymatrix{
&&\beta&&&\qquad&&&\alpha&&\\
Z&&&&Y&&Y&&&&X\\
\ar@{--}[dddddddddd]\ar@{--}[rrrr]&&&&\ar@{--}[dddddddddd]&
&\ar@{--}[dddddddddd]\ar@{--}[rrrr]&&&&\ar@{--}[dddddddddd]\\
\bullet&&&&\bullet\ar@{.}[rr]&\text{\tiny$y_9$}&
\bullet\ar[rrrr]\ar[rrrrd]&&&&\bullet\\
\bullet\ar[rrrru]&&&&\bullet\ar@{.}[rr]&\text{\tiny${y}_8$}&
\bullet&&&&\bullet\\
\bullet&&&&\bullet\ar@{.}[rr]\ar@{=>}[llll]&\text{\tiny${y}_7$}&
\bullet\ar@{=>}[rrrr]\ar@{=>}@/^4pc/[ddd]\ar@{=>}@/^1.5pc/[d]&&&&\bullet\\
\bullet&&&&\bullet\ar@{.}[rr]\ar@{=>}[lllld]\ar@{=>}@/^1.5pc/[u]
&\text{\tiny${y}_6$}&\bullet&&&&\bullet\\
\bullet&&&&\bullet\ar@{.}[rr]&\text{\tiny${y}_5$}&
\bullet\ar@{=>}@/_1.5pc/[u]&&&&\bullet\\
\bullet&&&&\bullet\ar@{.}[rr]\ar@{=>}[llll]\ar@{=>}@/^1.5pc/[u]
&\text{\tiny${y}_4$}&\bullet\ar@{=>}[rrrr]&&&&\bullet\\
\bullet\ar@{=>}[rrrr]&&&&\bullet\ar@{.}[rr]&\text{\tiny${y}_3$}&
\bullet&&&&\bullet\ar@{=>}[llll]\\
\bullet&&&&\bullet\ar@{.}[rr]\ar[lllld]&\text{\tiny$y_2$}&\bullet&&&&\bullet\\
\bullet\ar[rrrr]&&&&\bullet\ar@{.}[rr]&\text{\tiny$y_1$}&
\bullet\ar@/_1.5pc/[u]&&&&\bullet\\
\ar@{--}[rrrr]&&&&&&\ar@{--}[rrrr]&&&&
}
\end{displaymath}
\caption{Example with $(\alpha,\beta)$-frothy edges drawn doubled}
\label{fig9}
}
\end{figure}

An $\aleph$-connected sequence $(a_1,b_1)$, $(a_2,b_2)$,\dots,  
$(a_m,b_m)$, where $m\in\{2,3,4,\dots\}$, is called an 
{\em  $\aleph$-frothy cycle} provided that the following conditions 
are satisfied:
\begin{enumerate}[(I)]
\setcounter{enumi}{2}
\item\label{c3} $a_1=b_m$ as elements of $X_{\coprod}$;
\item\label{c4} $(a_1,b_1)$ and $(a_m,b_m)$ come from different PBRs;
\item\label{c5} all edges $(a_i,b_i)$, $i=1,2,\dots,m$, are 
$\aleph$-frothy. 
\end{enumerate}
Directly from the definition we have that a cyclic permutation of
an $\aleph$-frothy cycle is again an $\aleph$-frothy cycle
(note here importance of condition \eqref{c4} to guarantee preservation 
of condition \eqref{c1}). We will call two $\aleph$-frothy cycles
{\em na{\"\i}vely equivalent} if they can be obtained from each other by 
a cyclic permutation. In what follows we will call a na{\"\i}ve equivalence 
class of $\aleph$-frothy cycles simply a {\em frothy cycle}
(if $\aleph$ is clear from the context). 

Two frothy cycles are called {\em elementary-equivalent}
provided that they contain a common edge and in both cycles this
edge appears as an edge of the same PBR (note that the relation of
elementary-equivalence is both symmetric and reflexive but not 
transitive in general). 
In the example shown in Figure~\ref{fig9} 
the two frothy cycles  $(y_6,y_7),(y_7,y_6)$ and 
$(y_4,y_5),(y_5,y_6),(y_6,y_7),(y_7,y_4)$
are elementary-equivalent. Finally, two frothy cycles 
$\xi$ and $\zeta$ are  called {\em equivalent} provided that there is
a sequence $\xi=\xi_0,\xi_1,\dots,\xi_k=\zeta$ of frothy cycles
for some $k\in\mathbb{N}$ such that every pair of consecutive 
frothy cycles in this sequence is elementary-equivalent. This 
is the minimum equivalence relations containing the relation of 
elementary-equivalence.

Write $M_{\aleph}$ for the set of equivalence classes of 
$\aleph$-frothy cycles; and define  $\mathfrak{f}(\aleph) = |M_{\aleph}|$. 
By definition, every
frothy edge appears in at most one equivalence class of 
frothy cycles, which implies that $\mathfrak{f}(\aleph)$ is finite.
In the example shown in Figure~\ref{fig9} we have
$\mathfrak{f}((\alpha,\beta))=1$.

\begin{proposition}\label{prop3}
Let $\alpha\in\mathfrak{PB}(X,Y)$, $\beta\in \mathfrak{PB}(Y,Z)$
and $\gamma\in \mathfrak{PB}(Z,U)$. Then
\begin{equation}\label{eq1}
\mathfrak{f}((\beta\circ\alpha,\gamma))+ \mathfrak{f}((\alpha,\beta))=
\mathfrak{f}((\alpha,\beta,\gamma))=
\mathfrak{f}((\alpha,\gamma\circ\beta))+ \mathfrak{f}((\beta,\gamma)).
\end{equation}
\end{proposition}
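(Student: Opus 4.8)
The plan is to prove the first equality in \eqref{eq1}; the second follows by applying $\star$, or equivalently by the same argument read from the other side. Set $\aleph := (\alpha,\beta,\gamma)$ and $\xi := \beta\circ\alpha$, so $(\xi,\gamma)$ is a composable sequence on $(X,Z,U)$. The guiding idea mirrors the proof of Proposition~\ref{thm1}: a $(\xi,\gamma)$-connected sequence is obtained from an $\aleph$-connected one by collapsing every maximal consecutive run of $(\alpha,\beta)$-edges into a single $\xi$-edge recording the pair of endpoints it connects. The key additional observation is that this collapsing operation partitions the $\aleph$-frothy cycles into two kinds: those lying entirely inside $\alpha\cup\beta$ (these survive as $(\alpha,\beta)$-frothy cycles), and those that genuinely involve an edge of $\gamma$ (these descend to $(\xi,\gamma)$-frothy cycles). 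Making this dichotomy precise and showing it respects the equivalence relations is the content of the proof.

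First I would establish the easy inclusion: an $(\alpha,\beta)$-connected sequence that is an $(\alpha,\beta)$-frothy cycle is automatically $\aleph$-frothy (no $\gamma$-edge can be appended to reach $X\coprod U$ without passing through a $Y$-element, and the frothiness with respect to $(\alpha,\beta)$ already forbids connecting to $X$; connecting to $Z$-elements via $\beta$ is likewise blocked since $Z$-elements are interior to $\aleph$ only through $\gamma$), and conversely an $\aleph$-frothy cycle using only $\alpha\cup\beta$ edges is $(\alpha,\beta)$-frothy. I would check that this correspondence sends naïve-equivalence classes to naïve-equivalence classes and, crucially, that it is compatible with elementary-equivalence in both directions — two such cycles share a common edge from the same PBR at the $\aleph$-level if and only if they do at the $(\alpha,\beta)$-level, because the relevant PBRs $\alpha,\beta$ are literally the same. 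This yields an injection from $M_{(\alpha,\beta)}$ into $M_{\aleph}$, landing exactly on the classes all of whose representatives avoid $\gamma$.

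Next I would treat the classes that do meet $\gamma$. Given an $\aleph$-frothy cycle $\zeta$ containing at least one $\gamma$-edge, I apply the collapsing map: each maximal $(\alpha,\beta)$-run inside $\zeta$ becomes a single $\xi$-edge. One must verify (i) that the resulting $\xi$-edges are genuine edges of $\xi=\beta\circ\alpha$ — immediate from the definition of composition — (ii) that the collapsed sequence is still a cycle satisfying \eqref{c3} and \eqref{c4} (condition \eqref{c4} needs the observation that a $\gamma$-edge cannot be adjacent to another $\gamma$-edge in $\zeta$, so after collapsing, consecutive edges still alternate between $\xi$ and $\gamma$), and (iii) that every collapsed edge is $(\xi,\gamma)$-frothy. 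For (iii), if some collapsed $\xi$-edge $(a,b)$ occurred in a $(\xi,\gamma)$-connected sequence reaching $X\coprod U$, one could expand $(a,b)$ back into an $(\alpha,\beta)$-connected sequence, producing an $\aleph$-connected sequence from an edge of $\zeta$ to $X\coprod U$ — contradicting $\aleph$-frothiness of that edge. Conversely, every $(\xi,\gamma)$-frothy cycle expands (by choosing, for each $\xi$-edge, an $(\alpha,\beta)$-witness) to an $\aleph$-frothy cycle meeting $\gamma$; the witnessing edges are $\aleph$-frothy by the same contradiction argument. I would then show collapse and expansion descend to mutually inverse bijections on equivalence classes — the delicate point being well-definedness under the choice of $(\alpha,\beta)$-witnesses, which is exactly where elementary-equivalence enters: two different witness-expansions of the same $(\xi,\gamma)$-frothy cycle differ by a sequence of elementary moves, because any two $(\alpha,\beta)$-connected sequences joining the same pair of endpoints are connected through frothy cycles sharing edges.

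Combining the two parts, $M_{\aleph}$ is the disjoint union of the image of $M_{(\alpha,\beta)}$ and a set in bijection with $M_{(\xi,\gamma)} = M_{(\beta\circ\alpha,\gamma)}$, giving $\mathfrak{f}((\alpha,\beta,\gamma)) = \mathfrak{f}((\alpha,\beta)) + \mathfrak{f}((\beta\circ\alpha,\gamma))$, as desired. The main obstacle I anticipate is precisely step (iii) together with the well-definedness of expansion on equivalence classes: one must be careful that the equivalence relation generated by elementary-equivalence is coarse enough to absorb all ambiguity in the choice of $(\alpha,\beta)$-witnesses, yet the bijection still respects it — this requires a careful bookkeeping argument showing that two expansions of a single $(\xi,\gamma)$-frothy cycle, and more generally of elementary-equivalent such cycles, always land in the same $\aleph$-equivalence class. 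Everything else is the kind of routine diagram-chasing already exemplified in the proof of Proposition~\ref{thm1}.
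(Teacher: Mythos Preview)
Your proposal is correct and follows essentially the same route as the paper: split $M_{\aleph}$ into the classes living entirely in $\alpha\cup\beta$ (bijecting with $M_{(\alpha,\beta)}$) and the rest (bijecting with $M_{(\beta\circ\alpha,\gamma)}$ via the collapse/expand maps), with the frothiness of expanded edges verified by the same contradiction argument you outline in (iii). The one place the paper is slightly more explicit is the well-definedness step you flag at the end: rather than arguing directly that different witness-expansions are related by elementary moves, the paper isolates a short lemma (Lemma~\ref{lem6}) showing that any two equivalent $\aleph$-frothy cycles embed into a single $\aleph$-frothy cycle containing all edges of both, which makes the compatibility with equivalence immediate via a shared $\gamma$-edge.
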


\begin{proof}
We prove the left equality. The right equality then follows applying
the involution $\star$. Set $\aleph:=(\alpha,\beta,\gamma)$.
Then let $M_{\alpha\beta} \subset M_{\aleph}$ be the subset of 
equivalence classes of $\aleph$-frothy 
cycles satisfying the condition that every frothy cycle in the
class contains only edges from $\alpha$ and $\beta$.
Define $M_{\gamma}$ as the complement, so that $M_{\aleph} =
M_{\alpha\beta} \sqcup M_{\gamma}$. It is easy to see that 
$M_{\alpha\beta}$ can be alternatively described as  
the set of equivalence classes containing an $\aleph$-frothy cycle 
all edges of which are $(\alpha,\beta)$-frothy and hence
$|M_{\alpha\beta}| = \mathfrak{f}((\alpha,\beta))$. 

It remains to show that $|M_{\gamma}| = 
\mathfrak{f}((\beta\circ\alpha,\gamma)):=|M_{(\beta\circ\alpha,\gamma)}|$. 
For this it is enough to establish a bijection 
$F: M_{(\beta\circ\alpha,\gamma)} \rightarrow M_{\gamma}$.
Note that an $\aleph$-frothy cycle  belonging to a class in $M_{\gamma}$
may contain no edges from $\gamma$. However, in this case it contains
at least one edge from $\alpha$ or $\beta$, which is not
$(\alpha,\beta)$-frothy (since there must be another cycle in 
its class that passes via $\gamma$).

We now construct $F$. 
Given a $(\beta\circ\alpha,\gamma)$-frothy cycle $\omega$, we 
substitute every $\beta\circ\alpha$-edge $(a,b)$ in $\omega$ by an
$(\alpha,\beta)$-connected sequence connecting $a$ to $b$. The 
obtained sequence $(a_1,a_2),\dots,(a_k,b_k)$ obviously 
satisfies \eqref{c1}--\eqref{c4}. We claim that  it also satisfies 
\eqref{c5}, that is that all $(a_i,b_i)$ are $\aleph$-frothy. Since
equivalence classes contain na{\"\i}ve equivalence classes, it is enough 
to show that $(a_1,b_1)$ is $\aleph$-frothy. Assume not, and let
$\omega_1,(a_1,b_1),\omega_2$ be an $\aleph$-connected sequence
connecting two elements of $X\coprod U$ (here $\omega_1$ and $\omega_2$
are two $\aleph$-connected sequences). Then the sequence 
\begin{displaymath}
\xi:=\omega_1,(a_1,b_1),(a_2,b_2),\dots,(a_k,b_k),(a_1,b_1),\omega_2 
\end{displaymath}
is again 
$\aleph$-connected connecting the same two elements of $X\coprod U$.
By definition, the original $(\beta\circ\alpha,\gamma)$-frothy cycle $\omega$
contained at least one edge from $\gamma$, say $(s,t)$. By construction, 
this edge appears in $\xi$. Applying to $\xi$ the procedure described in 
the proof of  Proposition~\ref{thm1}, we obtain a
$(\beta\circ\alpha,\gamma)$-connected sequence which connects two elements
from $X\coprod U$ and contains $(s,t)$. This means that $(s,t)$ is
not $(\beta\circ\alpha,\gamma)$-frothy, a contradiction. As the result,
$(a_1,a_2),\dots,(a_k,b_k)$ is an $\aleph$-frothy cycle. It is of the second
type as it contains an edge from $\gamma$. Clearly,
equivalent $(\beta\circ\alpha,\gamma)$-frothy cycles are mapped to
equivalent $\aleph$-frothy cycles and hence we obtain a map from
$M_{(\beta\circ\alpha,\gamma)}$ to $M_{\gamma}$.  

Now given an equivalence class in  $M_{\gamma}$,
choose a representative $\omega$, containing some edge from $\gamma$.
Using the na{\"\i}ve equivalence, we may assume that the first edge in 
$\omega$ is from $\gamma$. Substitute in $\omega$ every maximal subsequence 
of consecutive edges  from $\alpha$  and $\beta$ by the pair of elements 
which this sequence connects. The result will be an
$(\beta\circ\alpha,\gamma)$-connected cycle and, using the arguments
as in the previous paragraph, one shows that this cycle is frothy. 
For this procedure to define
a map from  $M_{\gamma}$ to $M_{(\beta\circ\alpha,\gamma)}$ we thus are left to check that equivalent
$\aleph$-frothy cycles are mapped to equivalent 
$(\beta\circ\alpha,\gamma)$-frothy cycles. By construction, two
elementary-equivalent $\aleph$-frothy cycles sharing an edge from $\gamma$
are mapped to elementary-equivalent $(\beta\circ\alpha,\gamma)$-frothy cycles.
To proceed we will use the following lemma:

\begin{lemma}\label{lem6}
Let $\omega'$ and $\omega''$ be equivalent $\aleph$-frothy cycles.
Then there exists an  $\aleph$-frothy cycle $\omega$
containing all the edges of both.
\end{lemma}

\begin{proof}
For 
$\aleph$-frothy cycles $\xi$ and $\xi'$
sharing some edge $(s,t)$ we may write $\xi=\xi_1,(s,t),\xi_2$ and
$\xi'=\xi'_1,(s,t),\xi'_2$. 
Then denote by $\xi\boxdot\xi'$ the 
$\aleph$-frothy cycle $\xi_1,(s,t),\xi'_2,\xi'_1,(s,t),\xi_2$.
Now let $\omega'=\omega_1,\omega_2,\dots,\omega_m=\omega''$ be a sequence
of $\aleph$-frothy cycles such that every pair of consecutive cycles
is elementary-equivalent, with a given shared edge;
and take 
\begin{displaymath}
\omega:= (\dots((\omega_1\boxdot\omega_2)\boxdot
\omega_3)\boxdot\dots)\boxdot\omega_m. 
\end{displaymath}
\end{proof}

Let $\omega'$ and $\omega''$ be equivalent 
$\aleph$-frothy cycles, each containing some edge from $\gamma$,
and $\omega$ be an $\aleph$-frothy cycle given by Lemma~\ref{lem6}.
Then $\omega'$ and $\omega$ are elementary-equivalent, as are 
$\omega''$ and $\omega$. By the paragraph
preceding Lemma~\ref{lem6}, we have that $\omega'$ and $\omega$
are mapped to elementary-equivalent $(\beta\circ\alpha,\gamma)$-frothy 
cycles, as are $\omega''$ and $\omega$. It follows that the
images of $\omega'$ and $\omega''$ are equivalent, giving us
a well-defined map from $M_{\gamma}$ to $M_{(\beta\circ\alpha,\gamma)}$.

From their constructions it follows directly that the maps between 
$M_{\gamma}$ and $M_{(\beta\circ\alpha,\gamma)}$ are mutually inverse 
bijections. This completes the proof.
\end{proof}

\subsection{Deformed category}\label{s4.2}

We consider $\mathbb{N}_0$ as an additive monoid in the natural way.
Consider the category $\overline{\mathfrak{PB}}$ defined as
follows: objects of $\overline{\mathfrak{PB}}$ are the same as
objects of $\mathfrak{PB}$; for $X,Y\in \overline{\mathfrak{PB}}$
the morphism set $\overline{\mathfrak{PB}}(X,Y)$ equals
$\mathfrak{PB}(X,Y)\times \mathbb{N}_0$; for 
$(\alpha,k)\in \overline{\mathfrak{PB}}(X,Y)$ and
$(\beta,m)\in \overline{\mathfrak{PB}}(Y,Z)$ set
\begin{equation}\label{eq55}
(\beta,m)\diamond(\alpha,k):=
(\beta\circ\alpha,m+k+\mathfrak{f}(\alpha,\beta)).
\end{equation}

\begin{theorem}\label{thm5}
The above definition makes  $\overline{\mathfrak{PB}}$ into
a category.
\end{theorem}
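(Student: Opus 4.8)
The plan is to verify the two category axioms for $\overline{\mathfrak{PB}}$ directly from \eqref{eq55}. The identity morphism for $X$ should be $(\varepsilon_X,0)$. First I would check the unit laws: from Proposition~\ref{prop2} we have $\varepsilon_Y\circ\alpha=\alpha$ and $\beta\circ\varepsilon_X=\beta$, so by \eqref{eq55} it remains only to observe that $\mathfrak{f}((\alpha,\varepsilon_Y))=0$ and $\mathfrak{f}((\varepsilon_X,\beta))=0$. This should be immediate from the definition of a frothy cycle: any $(\alpha,\varepsilon_Y)$-connected sequence alternates between edges of $\alpha$ and edges of $\varepsilon_Y$, and every edge of $\varepsilon_Y$ is of the form $(y^{(c)},y^{(d)})$ or $(y^{(d)},y^{(c)})$ and hence continues to a vertex on the outer boundary $Y$ of $\varepsilon_Y$ (which is $X_{k+1}$ here); so no edge of $\varepsilon_Y$, and consequently no edge of $\alpha$ reached through it, can be frothy. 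Thus $(\varepsilon_Y,0)\diamond(\alpha,k)=(\alpha,k)$ and $(\beta,m)\diamond(\varepsilon_X,0)=(\beta,m)$.

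Next I would establish associativity. Take $(\alpha,k)\in\overline{\mathfrak{PB}}(X,Y)$, $(\beta,m)\in\overline{\mathfrak{PB}}(Y,Z)$, $(\gamma,n)\in\overline{\mathfrak{PB}}(Z,U)$. Expanding both bracketings with \eqref{eq55}, the underlying PBR components agree by Proposition~\ref{thm1} (associativity of $\circ$), and the $\mathbb{N}_0$-components are
$$n+m+k+\mathfrak{f}((\alpha,\beta))+\mathfrak{f}((\beta\circ\alpha,\gamma))$$
for $(\gamma,n)\diamond\big((\beta,m)\diamond(\alpha,k)\big)$, and
$$n+m+k+\mathfrak{f}((\beta,\gamma))+\mathfrak{f}((\alpha,\gamma\circ\beta))$$
for $\big((\gamma,n)\diamond(\beta,m)\big)\diamond(\alpha,k)$. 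These two are equal precisely by the chain of equalities \eqref{eq1} of Proposition~\ref{prop3}, since both sides equal $n+m+k+\mathfrak{f}((\alpha,\beta,\gamma))$. Hence the two triple composites coincide in $\overline{\mathfrak{PB}}$.

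The main obstacle here is essentially nonexistent at the level of this theorem, since all the real combinatorial work has been pushed into Proposition~\ref{prop3}: the content of Theorem~\ref{thm5} is just that the three-term ``cocycle-type'' identity \eqref{eq1}, together with associativity of $\circ$, assembles into associativity of $\diamond$, and that the frothy-cycle count vanishes against identities. So the proof is a short bookkeeping argument; the only point requiring a moment's care is the verification $\mathfrak{f}((\alpha,\varepsilon_Y))=\mathfrak{f}((\varepsilon_X,\beta))=0$ for the unit axiom, which I would phrase in terms of the observation above that every vertex incident to an edge of an identity PBR lies on the outer boundary of the composable pair and hence is never frothy.
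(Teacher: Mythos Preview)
Your approach is the same as the paper's: associativity of $\diamond$ via Proposition~\ref{prop3} (the identity \eqref{eq1}) together with Proposition~\ref{thm1}, and the identity axiom via $\mathfrak{f}((\alpha,\varepsilon_Y))=\mathfrak{f}((\varepsilon_X,\beta))=0$.

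One small imprecision in your identity argument: the claim that ``no edge of $\varepsilon_Y$ can be frothy'' is not literally correct. If, say, $\alpha$ has no edge incident to some $y\in Y$, then the edge $(y^{(d)},y^{(c)})\in\varepsilon_Y$ \emph{is} $(\alpha,\varepsilon_Y)$-frothy, since no $(\alpha,\varepsilon_Y)$-connected sequence through it can both start and end in $X\coprod X_3$. What is true (and what the paper's one-line observation ``$\varepsilon_X$ has no edges connecting two elements of the codomain'' is really pointing at) is that no edge of $\varepsilon_Y$ can occur in a frothy \emph{cycle}: every vertex in such a cycle is a junction between an $\alpha$-edge and an $\varepsilon_Y$-edge and hence must lie in the middle layer $X_2$, whereas every edge of $\varepsilon_Y$ has one endpoint in $X_3$. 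This forces the set of frothy cycles to be empty, giving $\mathfrak{f}((\alpha,\varepsilon_Y))=0$. With this cosmetic correction your proof is complete and coincides with the paper's.
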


\begin{proof}
Associativity of $\diamond$ follows from Proposition~\ref{prop3}.
Note that the identity morphism 
$\varepsilon_X$ in $\mathfrak{PB}(X,X)$ does not
have any edges connecting two elements of the codomain.
This implies that for any $\alpha\in \mathfrak{PB}(X,Y)$ we have
$\mathfrak{f}(\varepsilon_X,\alpha)=\mathfrak{f}(\alpha,\varepsilon_Y)=0$.
Hence  $(\varepsilon_X,0)$ is the identity morphism  in 
$\overline{\mathfrak{PB}}(X,X)$. The claim follows.
\end{proof}

\subsection{Deformed partition category via restriction}\label{s4.3}

Recall, from \cite{Mar1}, that the category $\mathfrak{P}$ admits 
deformation $\overline{\mathfrak{P}}$, similar to the deformation 
$\overline{\mathfrak{PB}}$ of $\mathfrak{PB}$. It is constructed as
follows: The category $\overline{\mathfrak{P}}$ has the same objects
as $\mathfrak{P}$. For $X,Y\in \overline{\mathfrak{P}}$ the set
$\overline{\mathfrak{P}}(X,Y)$ equals $\mathfrak{P}(X,Y)\times\mathbb{N}_0$
and the multiplication in $\overline{\mathfrak{P}}(X,Y)$ is given for
$(\alpha,k)\in \overline{\mathfrak{P}}(X,Y)$ and
$(\beta,m)\in \overline{\mathfrak{P}}(Y,Z)$ by the following:
\begin{displaymath}
(\beta,m)\diamond(\alpha,k):=
(\beta\circ\alpha,m+k+\mathfrak{p}(\alpha,\beta)),
\end{displaymath}
where $\mathfrak{p}(\alpha,\beta)$ is defined as follows: Denote by
$Y'$ the set of all $y\in Y$ for which there does {\em not} exist a
sequence $y=a_1,a_2,a_3,\dots,a_p$, where all $a_i\in X\coprod Y\coprod Z$,
$a_p\in X\coprod Z$, and such that every two 
consecutive elements in this sequence 
belong to the same part of either $\alpha$ or $\beta$. Introduce
an equivalence relation $\sim$ on $Y'$ as follows: 
$y_1\sim y_2$ for $y_1,y_2\in Y'$ if and only if there is  a
sequence $y_1=a_1,a_2,a_3,\dots,a_p=y_2$, where all $a_i\in Y$,
such that every two consecutive elements in this sequence 
belong to the same part of either $\alpha$ or $\beta$.
Then $\mathfrak{p}(\alpha,\beta)$ is defined as the number of equivalence
classes of $\sim$. Our main observation in this subsection is the
following statement which says that $\Psi$ can be lifted up to
the level of deformed categories.

\begin{proposition}\label{prop9}
Define $\overline{\Psi}:\overline{\mathfrak{P}}\to
\overline{\mathfrak{PB}}$ as the identity on objects and
$\overline{\Psi}((\alpha,k)):=(\Psi(\alpha),k)$ for any
morphism $(\alpha,k)$. Then 
\begin{displaymath}
\overline{\Psi}((\beta,m)\diamond(\alpha,k))=
\overline{\Psi}((\beta,m))\diamond\overline{\Psi}((\alpha,k)) 
\end{displaymath}
for all composable morphisms $(\alpha,k)$ and $(\beta,m)$ in
$\overline{\mathfrak{P}}$.
\end{proposition}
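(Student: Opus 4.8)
The plan is to reduce the claim to the single non-trivial identity
\begin{equation*}
\mathfrak{f}(\Psi(\alpha),\Psi(\beta)) = \mathfrak{p}(\alpha,\beta)
\qquad\text{for all composable }\alpha\in\mathfrak{P}(X,Y),\ \beta\in\mathfrak{P}(Y,Z),
\end{equation*}
since once this is established everything else follows formally: $\overline{\Psi}$ is the identity on the $\mathbb{N}_0$-coordinate, $\Psi$ is already known to respect $\circ$ (stated in Subsection~\ref{s3.3}), and so comparing \eqref{eq55} with the analogous formula for $\overline{\mathfrak{P}}$ reduces to matching the correction terms $\mathfrak{f}$ and $\mathfrak{p}$. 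So the real content is a combinatorial identification of frothy cycles for the pair of equivalence relations $\Psi(\alpha),\Psi(\beta)$ with the equivalence classes of the relation $\sim$ on the ``trapped'' subset $Y'\subseteq Y$.

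First I would unwind both sides. On the $\mathfrak{p}$-side, $Y'$ consists of those $y\in Y$ that cannot be $(\alpha,\beta)$-connected (in the partition sense, via chains through $X\coprod Y\coprod Z$) to any element of $X\coprod Z$; and $\sim$ identifies two such $y$'s when they lie in a common part of $\alpha$ or $\beta$, transitively, staying inside $Y$. On the $\mathfrak{f}$-side, I would first observe that because $\Psi(\alpha)$ and $\Psi(\beta)$ are (symmetric) equivalence relations, the notion of an $(\Psi(\alpha),\Psi(\beta))$-connected sequence collapses: condition \eqref{c2} forces $b_i=a_{i+1}$, and within a single equivalence relation any two elements of a common part are joined by an edge, so the combinatorial data of a connected sequence is essentially just a walk alternating between $\alpha$-parts and $\beta$-parts. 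Consequently an edge $(a,b)\in\Psi(\alpha)$ (say) is $(\Psi(\alpha),\Psi(\beta))$-frothy exactly when neither $a$ nor $b$ can be reached from $X\coprod Z$ by such an alternating walk — which, chasing definitions, is exactly the condition $a,b\in Y'$. Thus the frothy edges of $\Psi(\alpha)$ are precisely the pairs inside parts of $\alpha$ restricted to $Y'$, and similarly for $\beta$.

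Next I would build the bijection $M_{(\Psi(\alpha),\Psi(\beta))}\to\{\text{classes of }\sim\}$. Given an equivalence class of frothy cycles, all its edges lie inside $Y'$ (by the previous paragraph), and consecutive edges share an endpoint, so the set of all elements appearing in the cycles of that class is a single $\sim$-class in $Y'$; this defines the map. Conversely, given a $\sim$-class $C\subseteq Y'$, pick any $y\in C$ and any element $y'$ of $C$ lying in a common $\alpha$-part or $\beta$-part with $y$ — the existence of at least one ``real'' cycle here is exactly where I would need a small argument: a $\sim$-class that is a singleton $\{y\}$ with $y$ trapped would have no frothy \emph{cycle} at all (cycles require $m\ge 2$ and, via \eqref{c4}, edges from two different relations). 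I expect this to be the main obstacle, and I believe the resolution is that it cannot happen: if $y\in Y'$ then $y$ lies in some part $P$ of $\alpha$ and some part $Q$ of $\beta$; since $y\notin$ the ``connected-to-$X\coprod Z$'' set, all of $P$ and all of $Q$ lie in $Y'$ too, and moreover $P\cup Q$ cannot be the single element $\{y\}$ in both — if $P=\{y\}=Q$ then $y$ is isolated in both partitions, and I would need to check whether the conventions still count it; most likely $Y$ is taken so that such fully-isolated points are allowed and then one verifies directly that $\mathfrak{p}$ and $\mathfrak{f}$ treat them identically (both count such a $y$ as its own class), so the correspondence still matches. Having dispatched that, one checks the two maps are mutually inverse by the same ``collect all elements touched'' / ``span the class by a cycle'' bookkeeping, using Lemma~\ref{lem6} to see that elementary-equivalence of frothy cycles corresponds exactly to the transitivity step in the definition of $\sim$. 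This gives $\mathfrak{f}(\Psi(\alpha),\Psi(\beta))=\mathfrak{p}(\alpha,\beta)$ and hence the Proposition.
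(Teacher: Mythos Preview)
Your reduction to showing $\mathfrak{f}(\Psi(\alpha),\Psi(\beta))=\mathfrak{p}(\alpha,\beta)$ and the overall bijection strategy are exactly what the paper does. The one place where your sketch wobbles is the singleton worry, and this is also where your injectivity argument stays vague. Both are resolved by a single observation you overlook: $\Psi(\alpha)$ and $\Psi(\beta)$ are \emph{equivalence relations}, hence reflexive, so for every $y\in Y$ the loop $(y,y)$ lies in both $\Psi(\alpha)$ and $\Psi(\beta)$. Thus for any $y\in Y'$ the sequence $(y,y),(y,y)$ (first edge from $\Psi(\alpha)$, second from $\Psi(\beta)$) is a genuine $(\Psi(\alpha),\Psi(\beta))$-frothy cycle with $m=2$, satisfying \eqref{c4}. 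This immediately dispatches the isolated-point case: even when $\{y\}$ is a singleton part in both partitions, there \emph{is} a frothy cycle, and it maps to the $\sim$-class $\{y\}$.

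More importantly, these loop-cycles are what make injectivity clean in the paper. Rather than invoking Lemma~\ref{lem6}, the paper shows: (i) any frothy cycle $\omega$ with first edge $(s,t)$ is elementary-equivalent to $(s,s),(s,s),\omega$, which in turn is elementary-equivalent to $(s,s),(s,s)$; and (ii) for $s\sim t$ in $Y'$, one writes down an explicit frothy cycle through both $s$ and $t$ that is elementary-equivalent to each of $(s,s),(s,s)$ and $(t,t),(t,t)$. So every frothy cycle reduces to a canonical loop-cycle, and all loop-cycles in a given $\sim$-class are equivalent. Your proposal is on the right track, but without the reflexivity/loop observation the surjectivity and injectivity steps remain incomplete.
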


\begin{proof}
To prove this statement we need to check that for any
morphisms $\alpha\in\mathfrak{P}(X,Y)$ and $\beta\in\mathfrak{P}(Y,Z)$ 
there is a bijection between the set $M_1$ of equivalence classes for 
the relation  $\sim$ defined above and the set $M_2$ of equivalence 
classes of $(\Psi(\alpha),\Psi(\beta))$-frothy cycles. 
 
Every  $(\Psi(\alpha),\Psi(\beta))$-frothy cycle consists of edges 
between elements in $Y$. From the definition of $\Psi$ it follows easily that
all these elements, in fact, belong to $Y'$. Moreover, from the definition 
of $\sim$ it follows  that all these element are $\sim$-related. 
Hence we can define a map from the set of 
$(\Psi(\alpha),\Psi(\beta))$-frothy cycles
to $M_1$ by sending each cycle to the corresponding equivalence class
of $\sim$ described above. Since $\sim$ is an equivalence relation,
elementary equivalent cycles have the same image. This means that this
map factors through $M_2$ giving us a map from $M_2$ to $M_1$.

First of all we claim that this map is surjective. Indeed, given an
equivalence class $N$ of $\sim$, let $y\in N$.
Then the construction of $\Psi$ implies that the edge $(y,y)$
is contained both in $\Psi(\alpha)$ and $\Psi(\beta)$. Therefore
$(y,y),(y,y)$ is a $(\Psi(\alpha),\Psi(\beta))$-frothy cycle
(in which the first edge is in  $\Psi(\alpha)$
and the second edge is in $\Psi(\beta))$).
By construction, the cycle $(y,y),(y,y)$ is mapped to $N$, which
implies surjectivity.

Now we claim that our map is injective. Let $N$ be an equivalence
class of $\sim$. 
To prove the assertion we have to show that all
$(\Psi(\alpha),\Psi(\beta))$-frothy cycles mapped to $N$ are 
equivalent. For this it is enough to show that every
$(\Psi(\alpha),\Psi(\beta))$-frothy cycle mapped to $N$ is
equivalent to a $(\Psi(\alpha),\Psi(\beta))$-frothy cycle
of the form $(y,y),(y,y)$ as above; and that all  such
$(\Psi(\alpha),\Psi(\beta))$-frothy cycles are equivalent.

Let $\omega$ be am $(\Psi(\alpha),\Psi(\beta))$-frothy cycles
and $(s,t)$ its first edge. Then $(s,s),(s,s),\omega$, where the 
first edge $(s,s)$ is considered from  the same factor 
($\alpha$ or $\beta$) as the edge $(s,t)$ of $\omega$,
is a $(\Psi(\alpha),\Psi(\beta))$-frothy cycle, which is elementary
equivalent to $\omega$. On the other hand, the cycle
$(s,s),(s,s),\omega$ is elementary equivalent to $(s,s),(s,s)$.
Hence $\omega$ is equivalent to $(s,s),(s,s)$.

Now let $s,t\in N$ and $s=a_1,a_2,\dots,a_k=t$ be a sequence of 
elements from $Y'$ in which every pair of consecutive elements
belongs to the same part of either $\alpha$ or $\beta$. Without
loss of generality we may even assume that this alternates in
the sense that if $a_1$ and $a_2$ belong to the same edge of
$\alpha$, then $a_2$ and $a_3$ belong to the same edge of $\beta$
and so on. From the definition of $\Psi$ it follows that we have
a $(\Psi(\alpha),\Psi(\beta))$-connected sequence as follows:
$(a_1,a_2),(a_2,a_3),\dots,(a_{k-1},a_k)$. This yields existence of
a $(\Psi(\alpha),\Psi(\beta))$-frothy cycle as follows:
\begin{displaymath}
\omega:=(a_1,a_2),(a_2,a_3),\dots,(a_{k-1},a_k),(a_k,a_k),
(a_{k},a_{k-1}),\dots,(a_2,a_1),(a_1,a_1).
\end{displaymath}
Here $(a_i,a_{i-1})$ and $(a_{i-1},a_i)$ are considered as edges of
the same factor ($\alpha$ or $\beta$), $(a_k,a_k)$ is considered
as an edge from the factor, different from the factor containing 
$(a_{k-1},a_k)$, and $(a_1,a_1)$ is considered
as an edge from the factor, different from the factor containing 
$(a_{1},a_2)$. The cycle $\omega$ is elementary equivalent to both
$(s,s),(s,s)$ and $(t,t),(t,t)$, which implies that the latter two
cycles are equivalent. This yields injectivity.

The above implies that our map is bijective and the claim of the 
proposition follows.
\end{proof}

The deformation $\overline{\mathfrak{P}}$ 
of the partition category contains deformations
of both Brauer and Temperley-Lieb categories as well as the
one-parameter deformation of the partial Brauer category
(\cite{Maz2}). The map $\overline{\Psi}$ embeds them 
into $\overline{\mathfrak{PB}}$ by restriction. Some diagram
categories admit a two-parameter deformation, see
\cite{Maz2,MM,Mar3}. However, we do not know how to realize
these one in terms of the category $\overline{\mathfrak{PB}}$.

\subsection{Oriented Brauer and Temperley-Lieb 
categories}\label{s4.5}

For finite sets $X$ and $Y$ a PBR $\alpha\in\mathfrak{PB}(X,Y)$ is
called an {\em oriented partial Brauer diagram} provided that every 
element $s\in X\coprod Y$ appears in at most one edge of $\alpha$. 
An oriented partial Brauer diagram $\alpha$ for which every 
element $s\in X\coprod Y$ appears in exactly one edge of $\alpha$
is called an {\em oriented Brauer diagram}. An example of an oriented 
partial Brauer diagram is given in Figure~\ref{fig51}. 
One can say that an oriented (partial) Brauer diagram is obtained from a
usual (partial) Brauer diagram (see \cite{Br,Maz1}) by choosing orientation 
of all chords on the latter. A (partial) Brauer diagram is obtained from 
an oriented (partial) Brauer diagram by forgetting the orientation.

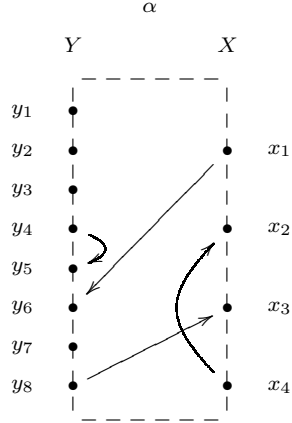
\begin{figure}
{\tiny
\begin{displaymath}
\xymatrixrowsep{1mm}
\xymatrixcolsep{1.5mm}
\xymatrix{
&&&\alpha&&&\\
&Y&&&&X&\\
&\ar@{--}[rrrr]\ar@{--}[ddddddddd]&&&&\ar@{--}[ddddddddd]&\\
y_1&\bullet&&&&&\\
y_2&\bullet&&&&\bullet\ar[lllldddd]&x_1\\
y_3&\bullet&&&&&\\
y_4&\bullet\ar@/^1pc/[d]&&&&\bullet&x_2\\
y_5&\bullet&&&&&\\
y_6&\bullet&&&&\bullet&x_3\\
y_7&\bullet&&&&&\\
y_8&\bullet\ar[rrrruu]&&&&\bullet\ar@(ul,dl)[uuuu]&x_4\\
&\ar@{--}[rrrr]&&&&&
}
\end{displaymath}
\caption{Oriented partial Brauer diagram}
\label{fig51}
}
\end{figure}

\begin{lemma}\label{lem52}
Let $\alpha$ be a PBR on $(X,Y)$ and $\beta$ be a PBR on $(Y,Z)$. 
Assume that both $\alpha$ and $\beta$ are oriented partial
Brauer diagrams. Then we have the following:
\begin{enumerate}[$($a$)$]
\item\label{lem52.1} The composition $\beta\circ\alpha$ is
an oriented partial Brauer diagram. 
\item\label{lem52.2} The number $\mathfrak{f}((\alpha,\beta))$ 
is the number of oriented cycles on the diagram from  Figure~\ref{fig9}.  
\end{enumerate}
\end{lemma}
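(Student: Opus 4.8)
The plan is to analyze what an oriented partial Brauer diagram looks like under composition by tracking the degree of each vertex. First I would observe that since $\alpha$ and $\beta$ are oriented partial Brauer diagrams, every element of $X\coprod Y$ (respectively $Y\coprod Z$) lies in \emph{at most one} edge; in particular every element has total degree (in-degree plus out-degree) at most one in each of $\alpha$ and $\beta$, so each element of $Y$ has degree at most two when we look at $\alpha$ and $\beta$ together. This means that any $(\alpha,\beta)$-connected sequence $(a_1,b_1),(a_2,b_2),\dots,(a_m,b_m)$ is essentially forced: having arrived at $b_i=a_{i+1}\in Y$ via an edge of $\alpha$ (say), there is at most one edge of $\beta$ incident to that vertex, hence at most one way to continue, and similarly alternating. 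So the edges of $\alpha$ and $\beta$, drawn together on the middle diagram as in Figure~\ref{fig9}, form disjoint paths and cycles, and the alternation condition \eqref{c1} is automatic because a vertex of $Y$ meets at most one $\alpha$-edge and at most one $\beta$-edge.

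For part~\eqref{lem52.1} I would then note that a connected sequence connecting an element $a\in X\coprod Z$ to an element $b\in X\coprod Z$ corresponds exactly to a maximal path in this union-of-paths-and-cycles whose two endpoints lie in $X$ or $Z$ (they must be endpoints of the path, i.e. vertices of degree one in the combined picture, since interior vertices lie in $Y$). Since the components are disjoint paths, each element of $X\coprod Z$ is an endpoint of at most one such path and hence lies in at most one edge of $\beta\circ\alpha$; moreover these paths are pairwise vertex-disjoint, so distinct edges of $\beta\circ\alpha$ use distinct endpoints. That is precisely the statement that $\beta\circ\alpha$ is an oriented partial Brauer diagram. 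I would also need to check that $\beta\circ\alpha$ contains no loops at a single vertex of $X$ or $Z$ beyond what is allowed — but a loop is still "at most one edge" so there is nothing to exclude; a vertex of $X$ could in principle be joined to itself by a path through $Y$, and this is a legitimate edge $(a,a)$ of the oriented partial Brauer diagram.

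For part~\eqref{lem52.2} I would argue that an $(\alpha,\beta)$-frothy edge is exactly an edge lying on one of the \emph{cyclic} components of the combined middle picture: an edge on a path component with at least one endpoint a path-endpoint is used by the connected sequence realizing that path, hence not frothy; conversely an edge on a cycle component can never appear in a connected sequence joining two elements of $X\coprod Z$, since such a sequence is an honest path whose vertices are all distinct except possibly the endpoints, and it cannot enter the cycle (entering would require a $Y$-vertex of degree $\ge 3$). Then I would show that each oriented cycle on the diagram gives exactly one na\"ive-equivalence class of $\aleph$-frothy cycles, and that distinct geometric cycles land in distinct equivalence classes: two frothy cycles are elementary-equivalent only if they share an edge coming from the same PBR, and since the cycle components are vertex-disjoint, frothy cycles arising from different components share no edges at all, so they are inequivalent. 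Hence $\mathfrak{f}((\alpha,\beta))$ equals the number of cyclic components, i.e. the number of oriented cycles in Figure~\ref{fig9}.

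The main obstacle I anticipate is the bookkeeping in part~\eqref{lem52.2} around the notion of "oriented cycle": the combined picture has edges directed (by the orientations of $\alpha$ and $\beta$), and one must be careful that a geometric cycle component, traversed as an $\aleph$-frothy cycle, respects condition~\eqref{c2} (head-to-tail matching) only when the orientations actually agree around the cycle — but condition~\eqref{c2} is $b_i=a_{i+1}$, which says consecutive edges meet head-to-tail, so not every undirected cycle of the combined graph is an oriented cycle; only those whose arrows are consistently oriented around the loop are. This is exactly why the statement says "oriented cycles" and not just "cycles," and the argument must verify that the frothy-cycle conditions \eqref{c1}--\eqref{c5} translate precisely into: consistently oriented closed walk, alternating between $\alpha$ and $\beta$ — which, given that each $Y$-vertex meets one $\alpha$-edge and one $\beta$-edge, is automatic for the cyclic components. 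Handling this correspondence cleanly, rather than getting lost in case analysis, is where care is needed.
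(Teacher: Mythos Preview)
Your approach is essentially the paper's: both hinge on the observation that each vertex lies in at most one edge of $\alpha$ and at most one of $\beta$, so $(\alpha,\beta)$-connected sequences are forced, giving (a) immediately and (b) via the resulting cycle structure; the paper simply compresses this to three lines without drawing the explicit path/cycle decomposition. Two small slips to tighten (neither affects the conclusion): frothy \emph{edges} can also live on path components whose forced directed walk never reaches $X\coprod Z$ (e.g.\ a path with an endpoint in $Y$), not only on cyclic components; and a single oriented geometric cycle yields infinitely many na\"{\i}ve classes via multiple traversals, not one, though all of them share edges and hence collapse to a single full equivalence class as required.
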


\begin{proof}
Any element of $X$ and $Z$ appears in at most one edge of $\alpha$
or $\beta$, respectively. 
Any element of $Y$ appears in at most one edge of $\alpha$ and
in at most one edge of $\beta$. Hence for every $s\in X\coprod Z$,
there is at most one $(\alpha,\beta)$-connected sequence
connecting $s$ to some element $t\in X\coprod Z$, moreover, $s\neq t$.
This implies both claim \eqref{lem52.1} and the fact that every
equivalence class of $(\alpha,\beta)$-frothy cycles consists of
a single element. The latter implies claim \eqref{lem52.2}.
\end{proof}

The collection of all oriented partial Brauer diagrams does not give rise to
a subcategory of $\mathfrak{PB}$ (or $\overline{\mathfrak{PB}}$) because of
the absence of identity morphisms. The collection of all oriented  
Brauer diagrams is not even closed under composition (the composition of
two oriented Brauer diagrams is only an oriented partial Brauer diagram
in general). One can remedy the situation in the following way
(confer \cite{RT}).

Define the category $\mathfrak{O}$ as follows:
Objects of $\mathfrak{O}$ are pairs $\mathbf{X}:=(X_1,X_2)$
of finite sets such that $X_1\subset X_2$. For 
$\mathbf{X},\mathbf{Y}\in \mathfrak{O}$ the set 
$\mathfrak{O}(\mathbf{X},\mathbf{Y})$ consists of all pairs
$(\alpha,k)$, where $k\in \mathbb{N}_0$ and $\alpha$ is an oriented
Brauer diagram $\alpha$ on $(X_2,Y_2)$ such that the following 
condition is satisfied:
\begin{equation}\label{eq56}
\text{For every edge }  (a,b)\in \alpha \text{ we have } 
a\in X_1\cup(Y_2\setminus Y_1)\text{ and } b\in Y_1\cup (X_2\setminus X_1). 
\end{equation}
For $(\alpha,k)\in \mathfrak{O}(\mathbf{X},\mathbf{Y})$ and
$(\beta,m)\in \mathfrak{O}(\mathbf{Y},\mathbf{Z})$ define the composition
$(\beta,m)\diamond  (\alpha,k)$ by formula \eqref{eq55}.
For $\mathbf{X}\in \mathfrak{O}$ denote by 
$\check{\varepsilon}_{\mathbf{X}}$ the oriented Brauer 
diagram of the identity morphism for $\mathbf{X}$. 
This diagram consists of all edges
$(x^{(d)},x^{(c)})$, $x\in X_1$,  and $(x^{(c)},x^{(d)})$, 
$x\in X_2\setminus X_1$, see example in Figure~\ref{fig57}.

\begin{figure}
{\tiny
\begin{displaymath}
\xymatrixrowsep{3mm}
\xymatrixcolsep{1.5mm}
\xymatrix{
&\ar@{--}[dddddd]\ar@{--}[rrrr]&&&&\ar@{--}[dddddd]&\\
x_1&\bullet&&&&\bullet\ar[llll]&x_1\\
x_2&\bullet\ar[rrrr]&&&&\bullet&x_2\\
x_3&\bullet\ar[rrrr]&&&&\bullet&x_3\\
x_4&\bullet&&&&\bullet\ar[llll]&x_4\\
x_5&\bullet&&&&\bullet\ar[llll]&x_5\\
&\ar@{--}[rrrr]&&&&
}
\end{displaymath}
\caption{The oriented Brauer diagram 
$\check{\varepsilon}_{(\{x_1,x_4,x_5\},\{x_1,x_2,x_3,x_4,x_5\})}$}
\label{fig57}
}
\end{figure}
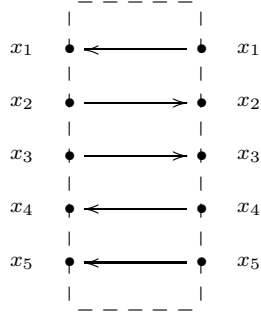

\begin{proposition}\label{prop56}
The construct $\mathfrak{O}$ above is a category, 
called {\em oriented Brauer category}.
\end{proposition}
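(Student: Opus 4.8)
The plan is to verify the three category axioms for $\mathfrak{O}$ in turn: closure of composition, associativity, and existence of identities. For closure, suppose $(\alpha,k)\in\mathfrak{O}(\mathbf{X},\mathbf{Y})$ and $(\beta,m)\in\mathfrak{O}(\mathbf{Y},\mathbf{Z})$. By Lemma~\ref{lem52}\eqref{lem52.1} the composite $\beta\circ\alpha$ is again an oriented partial Brauer diagram on $(X_2,Z_2)$; I would first argue it is in fact an oriented Brauer diagram, i.e. every element of $X_2\coprod Z_2$ meets exactly one edge. The source condition \eqref{eq56} on $\alpha$ says an element of $X_2$ that meets an edge of $\alpha$ heading \emph{into} $X_2$ lies in $X_2\setminus X_1$, and one meeting an edge heading \emph{out} lies in $X_1$; the analogous statement holds for $\beta$ on the $Y$-side, with the $Y_1$/$Y_2\setminus Y_1$ dichotomy controlling, for each $y\in Y_2$, whether its unique $\alpha$-edge and its unique $\beta$-edge point the ``compatible'' way so that an $(\alpha,\beta)$-connected sequence can pass through $y$. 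Tracing an $(\alpha,\beta)$-connected sequence starting from any $s\in X_2$: it alternates $\alpha,\beta,\alpha,\dots$, and at each interior vertex $y\in Y_2$ the two conditions force the orientations to match up, so the sequence never stalls and must exit at some $t\in Z_2$ (it cannot close into a cycle, since it started at a boundary element — cycles among $Y$-vertices are exactly the frothy ones). Hence every $s\in X_2$ (and symmetrically every $t\in Z_2$) meets exactly one edge of $\beta\circ\alpha$, so it is an oriented Brauer diagram, and checking that its edges again satisfy \eqref{eq56} relative to $\mathbf{X},\mathbf{Z}$ is a direct inspection of the endpoints of these connected sequences using the conditions on $\alpha$ and $\beta$ at the $X$- and $Z$-sides.

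Associativity of $\diamond$ is immediate: the first component composes associatively by Proposition~\ref{thm1}, and the $\mathbb{N}_0$-component is handled exactly as in the proof of Theorem~\ref{thm5} — the cocycle identity $\mathfrak{f}((\beta\circ\alpha,\gamma))+\mathfrak{f}((\alpha,\beta))=\mathfrak{f}((\alpha,\gamma\circ\beta))+\mathfrak{f}((\beta,\gamma))$ of Proposition~\ref{prop3} is precisely what is needed, and it applies here since oriented Brauer diagrams are PBRs. So $\mathfrak{O}$ inherits associativity from $\overline{\mathfrak{PB}}$; one should just note $\beta\circ\alpha$ stays within the allowed class by the previous paragraph, so the restriction makes sense.

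For identities I would take $\check{\varepsilon}_{\mathbf{X}}=(\check\varepsilon_{\mathbf{X}},0)$ as described. First check $\check\varepsilon_{\mathbf{X}}$ is indeed an oriented Brauer diagram on $(X_2,X_2)$ satisfying \eqref{eq56} for the pair $(\mathbf{X},\mathbf{X})$: its edges are $(x^{(d)},x^{(c)})$ for $x\in X_1$ and $(x^{(c)},x^{(d)})$ for $x\in X_2\setminus X_1$, and each element of $X_2\coprod X_2$ meets exactly one of these; the endpoint condition holds by construction. Then, for any $(\alpha,k)\in\mathfrak{O}(\mathbf{X},\mathbf{Y})$, one computes $\alpha\circ\check\varepsilon_{\mathbf{X}}=\alpha$ and $\mathfrak{f}((\check\varepsilon_{\mathbf{X}},\alpha))=0$, so $(\alpha,k)\diamond\check\varepsilon_{\mathbf{X}}=(\alpha,k)$, and dually on the other side; the frothy count vanishes because $\check\varepsilon_{\mathbf{X}}$, like $\varepsilon_X$ in Theorem~\ref{thm5}, has no edge joining two codomain elements, hence cannot contribute a frothy cycle. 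The computation $\alpha\circ\check\varepsilon_{\mathbf{X}}=\alpha$ is not quite the same as Proposition~\ref{prop2} since $\check\varepsilon_{\mathbf{X}}$ is not $\varepsilon_{X_2}$, so it needs its own short verification: an edge $(a,b)\in\alpha$ has, by \eqref{eq56}, source $a\in X_1\cup(Y_2\setminus Y_1)$, and when $a\in X_1$ the edge $(a^{(d)},a^{(c)})\in\check\varepsilon_{\mathbf{X}}$ prepends to give a connected sequence; when $a\in X_2\setminus X_1$ we instead have $(a^{(c)},a^{(d)})\in\check\varepsilon_{\mathbf{X}}$, but such $a$ cannot be a source of an $\alpha$-edge by \eqref{eq56}, so no conflict arises — and conversely every $(\check\varepsilon_{\mathbf{X}},\alpha)$-connected sequence reduces to a single $\alpha$-edge after deleting the at most one bounding $\check\varepsilon_{\mathbf{X}}$-edge.

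The main obstacle I anticipate is the first step: proving closure, specifically that the composite is a \emph{full} (not merely partial) oriented Brauer diagram satisfying \eqref{eq56}. This is where condition \eqref{eq56} does its real work — it is engineered precisely so that the orientations along an alternating $(\alpha,\beta)$-connected path through $Y_2$ are forced to be consistent, so that every boundary element is picked up and every connected component of the ``glued'' diagram is either a through-strand (contributing an edge of $\beta\circ\alpha$) or a frothy cycle (contributing to the count), with no dangling half-edges. Once this bookkeeping is made precise, the rest follows mechanically from the already-established Propositions~\ref{thm1} and~\ref{prop3} and Lemma~\ref{lem52}.
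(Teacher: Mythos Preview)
Your proposal is correct and follows essentially the same approach as the paper's proof, which dispatches closure as ``immediate from the definition of $\mathfrak{O}$,'' obtains associativity by restriction from Theorem~\ref{thm5}, and calls the identity verification a ``straightforward computation.'' You have simply filled in the details the paper elides---in particular the orientation-compatibility argument at $Y_2$-vertices that forces the composite to be a full (not merely partial) oriented Brauer diagram satisfying \eqref{eq56}, which is exactly the content hidden behind the paper's word ``immediate.''
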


\begin{proof}
For $(\alpha,k)\in \mathfrak{O}(\mathbf{X},\mathbf{Y})$ and
$(\beta,m)\in \mathfrak{O}(\mathbf{Y},\mathbf{Z})$, from the definition 
of $\mathfrak{O}$ it follows immediately that $\beta\circ\alpha$
is an oriented Brauer diagram. Now associativity is obtained 
from Theorem~\ref{thm5} by restriction. The fact that the 
$\check{\varepsilon}_{\mathbf{X}}$'s are identity morphisms is proved by 
a straightforward computation.
\end{proof}

The (standard skeleton of) classical Brauer category has a natural 
topological counterpart, known as the category of tangles 
(see e.g. \cite{Tu2}). The natural topological counterpart
of the category $\mathfrak{O}$ is the category of oriented tangles, 
see \cite{Tu}. The corresponding planar objects are the Temperley-Lieb
and the oriented Temperley-Lieb categories. To define the 
oriented Temperley-Lieb category $\mathfrak{OTL}$ for every finite set
$X$ fix a linear order $<_X$ on $X$. Then the category $\mathfrak{OTL}$
is defined as the subcategory of $\mathfrak{O}$ with the same set
of objects and containing all those morphisms $(\alpha,k)$ for which
the diagram of $\alpha$ can be drawn planar 
(whenever the elements of the domain 
and the codomain are listed with respect to the fixed linear order from
top to bottom). Similarly one defines the partial oriented Brauer category 
$\mathcal{P}\mathfrak{O}$ and the partial oriented Temperley-Lieb category 
$\mathcal{P}\mathfrak{OTL}$.

\section{Polarized factorization}\label{s5}

In this section we establish a factorization  of
partitioned  binary relations, called {\em polarized factorization}.

\subsection{Pure partitioned binary relations}\label{s5.1}

Let $X,Y\in\mathfrak{PB}$ and $\alpha\in\mathfrak{PB}(X,Y)$. The
PBR $\alpha$ is called {\em pure} provided that every edge in 
$\alpha$ consists of an element in $\mathrm{Dom}(\alpha)$ and
an element in $\mathrm{Codom}(\alpha)$. For example, both PBRs 
$\varepsilon_X$ and $\hat{\varepsilon}_X$ are pure while the
PBR $\overline{\varepsilon}_X$ is not pure (see Figure~\ref{fig3}). 
Another example of a 
pure PBR is shown in Figure~\ref{fig71} in the middle.

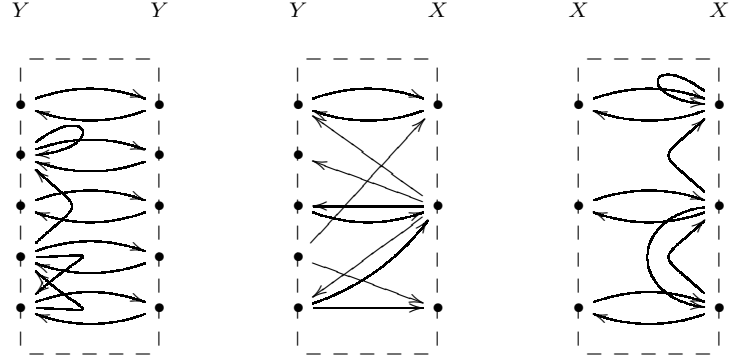
\begin{figure}
{\tiny
\begin{displaymath}
\xymatrixrowsep{3mm}
\xymatrixcolsep{1.5mm}
\xymatrix{
Y&&&&Y&&&&Y&&&&X&&&&X&&&&X\\
\ar@{--}[dddddd]\ar@{--}[rrrr]&&&&\ar@{--}[dddddd]
&&&&\ar@{--}[rrrr]\ar@{--}[dddddd]&&&&\ar@{--}[dddddd]
&&&&\ar@{--}[rrrr]\ar@{--}[dddddd]&&&&\ar@{--}[dddddd]\\
\bullet\ar@/^/[rrrr]&&&&\bullet\ar@/^/[llll]&&&&
\bullet\ar@/^/[rrrr]&&&&\bullet\ar@/^/[llll]&&&&
\bullet\ar@/^/[rrrr]&&&&\bullet\ar@/^/[llll]\ar@(ul,l)[]\\
\bullet\ar@/^/[rrrr]\ar@(ur,r)[]&&&&\bullet\ar@/^/[llll]&&&&
\bullet&&&&&&&&&&&&\\
\bullet\ar@/^/[rrrr]&&&&\bullet\ar@/^/[llll]&&&&
\bullet\ar@/_/[rrrr]&&&&\bullet\ar[llll]\ar[llllu]
\ar[lllluu]\ar[lllldd]&&&&
\bullet\ar@/^/[rrrr]&&&&\bullet\ar@/^/[llll]\ar@(l,l)[dd]\ar@(ul,dl)[uu]\\
\bullet\ar@/^/[rrrr]\ar@(ur,dr)[uu]\ar@(r,ur)[d]&&&&
\bullet\ar@/^/[llll]&&&&\bullet\ar[rrrrd]\ar[rrrruuu]&&&&&&&&&&&&\\
\bullet\ar@/^/[rrrr]\ar@(r,dr)[u]&&&&\bullet\ar@/^/[llll]&&&&
\bullet\ar[rrrr]\ar@/_/[rrrruu]&&&&\bullet&&&&
\bullet\ar@/^/[rrrr]&&&&\ar@(ul,dl)[uu]\bullet\ar@/^/[llll]\\
\ar@{--}[rrrr]&&&&&&&&\ar@{--}[rrrr]&&&&&&&&\ar@{--}[rrrr]&&&&
}
\end{displaymath}
\caption{Left polarized idempotent, pure PBR and 
right polarized idempotent}
\label{fig71}
}
\end{figure}

\begin{lemma}\label{lem72}
The composition of two composable pure PBRs is pure. Hence, taking 
all pure PBRs as morphisms defines a subcategory of $\mathfrak{PB}$
of pure PBRs, which we will denote by $\mathfrak{PPB}$. 
\end{lemma}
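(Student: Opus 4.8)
The plan is to prove that purity is preserved under composition by a direct analysis of $(\alpha,\beta)$-connected sequences. Suppose $\alpha\in\mathfrak{PB}(X,Y)$ and $\beta\in\mathfrak{PB}(Y,Z)$ are both pure, and let $(a,b)\in\beta\circ\alpha$ with $a,b\in X\coprod Z$. By definition there is an $(\alpha,\beta)$-connected sequence $(a_1,b_1),(a_2,b_2),\dots,(a_m,b_m)$ with $a_1=a$, $b_m=b$. I want to show $a\in X$ and $b\in Z$; since the composition is a PBR on $(X,Z)$ this will exactly say $\beta\circ\alpha$ is pure.

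First I would handle the endpoints. Consider the first edge $(a_1,b_1)$. It lies in either $\alpha$ or $\beta$; since $\alpha$ is a relation on $X\coprod Y$ and $\beta$ on $Y\coprod Z$, and both are pure, each of $a_1,b_1$ lies in one of $X$, $Y$, $Z$ with the two ends of the edge in ``opposite'' sides. In particular $a_1=a$ lies in $X\cup Y\cup Z$, but by hypothesis $a\in X\coprod Z$; I then need to rule out $a\in Z$. The point is that if $(a_1,b_1)\in\alpha$ then $a_1\in X\cup Y$ (purity of $\alpha$), and if $(a_1,b_1)\in\beta$ then $a_1\in Y\cup Z$; combined with $a\in X\coprod Z$ and $m\geq 1$, a short case check forces $a\in X$ — essentially because if $a\in Z$ then the first edge is in $\beta$, its other end $b_1$ is in $Y$ (purity of $\beta$), and one walks along; I should trace whether this can ever return to an element of $X\coprod Z$ only at the end. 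The cleanest formulation: track that every intermediate element $b_i=a_{i+1}$ that is a ``gluing'' vertex lies in $Y$ — this is forced by condition (I), since two successive edges come from different PBRs, so the shared vertex $b_i$ must lie in the common part $Y$ of the two underlying sets. Hence $b_1,\dots,b_{m-1}\in Y$ and symmetrically $a_2,\dots,a_m\in Y$, so only $a_1=a$ and $b_m=b$ can lie outside $Y$. Then purity of the first PBR containing $(a_1,b_1)$ together with $b_1\in Y$ (if $m\geq 2$) forces $a_1\notin Y$, hence $a_1\in X\coprod Z$, and the PBR containing that edge is determined: if it is $\alpha$, $a_1\in X$; if it is $\beta$, $a_1\in Z$. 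But if $(a_1,b_1)\in\beta$ with $b_1\in Y$, then the next edge $(a_2,b_2)$ is in $\alpha$, so by purity $a_2=b_1\in Y$ and $b_2\in Y$ for $m\geq 3$, etc.; the real content is that a connected sequence starting in $Z$ must end in $Z$ and one in $X$ must end in $X$, because at each alternation the ``active side'' swaps only between $\{X,Y\}$ and $\{Y,Z\}$ and the non-$Y$ endpoint of an $\alpha$-edge is in $X$ while that of a $\beta$-edge is in $Z$. I would make this precise by an induction on $m$ showing: if the first edge is in $\alpha$ then $a\in X$, and if in $\beta$ then $a\in Z$; and symmetrically for $b$ using the last edge. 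The case $m=1$ is immediate from purity of the single PBR involved. The second sentence of the lemma — that pure PBRs form a subcategory — then follows because $\varepsilon_X$ is pure (as already noted in the excerpt) and serves as the identity, and composition stays within pure PBRs.

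The main obstacle I anticipate is making the endpoint case analysis genuinely airtight rather than merely plausible: one must be careful that an $(\alpha,\beta)$-connected sequence cannot "enter" $X$ or $Z$ in the middle and that the alternation condition (I) together with purity really does pin down the side of every vertex. I expect the bookkeeping to be slightly fiddly because $X$, $Y$, $Z$ may overlap as abstract sets (the excerpt flags $X\cap Y\neq\varnothing$), so I would phrase everything in terms of the formal disjoint-union copies $X_i$ from Subsection~\ref{s2.2}, where each element unambiguously belongs to exactly one $X_i$, and purity of $\alpha_i$ says every edge of $\alpha_i$ has one end in $X_i$ and one end in $X_{i+1}$. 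With that convention the induction becomes mechanical, and I would present it as such rather than belaboring the routine verification.
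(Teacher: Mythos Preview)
Your approach matches the paper's, which simply records that $\varepsilon_X$ is pure and says closure under composition ``follows directly from definitions''; you are spelling that verification out. However, your sketch contains a genuine slip. You first set as goal ``$a\in X$ and $b\in Z$'', and later assert that ``a connected sequence starting in $Z$ must end in $Z$ and one in $X$ must end in $X$''. The first is too narrow (edges from $Z$ to $X$ also occur in $\beta\circ\alpha$ and are pure), and the second is exactly backwards: if it held, it would show that every edge of $\beta\circ\alpha$ has both ends on the same side, i.e.\ that $\beta\circ\alpha$ is \emph{not} pure.

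The correct picture is in fact simpler than your case analysis. Your own observation that every intermediate vertex $b_i=a_{i+1}$ lies in $Y$ already finishes the job: for $m\geq 3$ the second edge $(a_2,b_2)$ would then have both ends in $Y$, contradicting purity of whichever of $\alpha,\beta$ contains it; and $m=1$ is impossible since a pure edge of $\alpha$ (resp.\ $\beta$) has one end in $Y$ and so cannot have both ends in $X\coprod Z$. Hence $m=2$, the two edges lie in different PBRs by condition~(I), and purity forces one endpoint of the pair in $X$ and the other in $Z$. With this correction the argument is complete and the ``fiddly bookkeeping'' you anticipate disappears entirely.
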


\begin{proof}
As the PBR $\varepsilon_X$ of the identity morphism
is pure, to prove the claim we
have only to check that pure PBRs are closed with respect to 
composition. This follows directly from definitions.
\end{proof}

The category $\overline{\mathfrak{PB}}$ of Section~\ref{s4.2}
contains a subcategory
$\mathfrak{P}\overline{\mathfrak{PB}}$ which has the same objects 
as $\overline{\mathfrak{PB}}$ and whose morphisms are all morphisms
of the form $(\alpha,0)$, where $\alpha$ is a morphism from 
$\mathfrak{PPB}$. It is easy to see that no frothy cycles appear 
when composing two pure PBRs, and hence the categories 
$\mathfrak{P}\overline{\mathfrak{PB}}$ and $\mathfrak{PPB}$ are isomorphic.

The category $\mathfrak{PPB}$ admits a nice description in terms of
the category $\mathfrak{B}$ of binary relations.  
Consider the {\em double} $\mathfrak{B}^{\ltimes}$ 
of the category $\mathfrak{B}$ defined as follows: Objects of
$\mathfrak{B}^{\ltimes}$ are the same as objects of $\mathfrak{B}$. For
$X,Y\in \mathfrak{B}^{\ltimes}$ the set $\mathfrak{B}^{\ltimes}(X,Y)$ 
consists of
pairs $(\beta,\gamma)$, where $\beta\in \mathfrak{B}(X,Y)$ and
$\gamma\in \mathfrak{B}^{\mathrm{op}}(X,Y)$ (the opposite category).
For $(\beta,\gamma)\in \mathfrak{B}^{\ltimes}(X,Y)$ and
$(\beta',\gamma')\in \mathfrak{B}^{\ltimes}(Y,Z)$ the composition
is defined as follows:
\begin{displaymath}
(\beta',\gamma')(\beta,\gamma)=
(\beta'\beta,\gamma\gamma').
\end{displaymath}

\begin{proposition}\label{prop73}
The categories $\mathfrak{B}^{\ltimes}$ and $\mathfrak{PPB}$ are isomorphic.
\end{proposition}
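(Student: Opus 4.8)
The plan is to construct mutually inverse functors between $\mathfrak{PPB}$ and $\mathfrak{B}^{\ltimes}$ that are the identity on objects. Given a pure PBR $\alpha\in\mathfrak{PPB}(X,Y)$, every edge of $\alpha$ joins an element of the domain to an element of the codomain, so there are exactly two kinds of edges: those of the form $(x^{(d)},y^{(c)})$ and those of the form $(y^{(c)},x^{(d)})$. The first kind, read as pairs $(x,y)\in X\times Y$, forms a binary relation $\beta=\beta(\alpha)\in\mathfrak{B}(X,Y)$; the second kind, read as pairs, forms a binary relation that we record as an element $\gamma=\gamma(\alpha)\in\mathfrak{B}^{\mathrm{op}}(X,Y)$. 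This gives a bijection $\mathfrak{PPB}(X,Y)\to\mathfrak{B}^{\ltimes}(X,Y)$, $\alpha\mapsto(\beta(\alpha),\gamma(\alpha))$, since conversely any pair $(\beta,\gamma)$ assembles uniquely into a pure PBR. The first thing I would check is that under this bijection $\varepsilon_X\mapsto(\mathrm{id}_X,\mathrm{id}_X)$, which is immediate from the definition of $\varepsilon_X$ in Subsection~\ref{s2.3}: it has exactly the edges $(x^{(d)},x^{(c)})$ and $(x^{(c)},x^{(d)})$.

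The substance is compatibility with composition. Let $\alpha\in\mathfrak{PPB}(X,Y)$ and $\delta\in\mathfrak{PPB}(Y,Z)$, and consider an $(\alpha,\delta)$-connected sequence $\xi=(a_1,b_1),\dots,(a_m,b_m)$ connecting $a\in X\coprod Z$ to $b\in X\coprod Z$. Because $\alpha$ and $\delta$ are pure, each edge of $\xi$ strictly alternates sides of its own frame, and by condition~\eqref{c2} the element $b_i=a_{i+1}$ must lie in $Y$ for every intermediate $i$ (an element of $X$ only occurs in edges of $\alpha$, an element of $Z$ only in edges of $\delta$, so it cannot serve as the junction between a $\delta$-edge and an $\alpha$-edge); hence $a=a_1\in X\coprod Z$, $b=b_m\in X\coprod Z$, and all interior vertices are in $Y$. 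I would then do a short case analysis on where $a$ and $b$ lie. If $a\in X$ and $b\in Z$, the sequence traces $x\to y_{i_1}\to y_{i_2}\to\cdots\to z$ where the first step uses the $X\times Y$ part of $\alpha$ (i.e.\ $\beta(\alpha)$), the steps within $Y$ alternate between the $Y\times Y$-reading... — but note a pure PBR has no $Y\times Y$ edges, so in fact every step after the first must immediately return to a frame vertex; more carefully, an edge of $\delta$ at a $Y$-vertex is either $(y^{(d)},z^{(c)})$ or $(z^{(c)},y^{(d)})$ or $(y^{(d)},y'^{(c)})$ — again the last is excluded by purity only if $\delta$ has no same-side edges, which it does not. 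So each connected sequence from $X$ to $Z$ has length exactly $2$: $(x^{(d)},y^{(c)})\in\alpha$ followed by $(y^{(d)},z^{(c)})\in\delta$ (with $y^{(c)}$ and $y^{(d)}$ identified in $X_{\coprod}$), which says precisely $(x,z)\in\beta(\delta)\beta(\alpha)$. Symmetrically, a sequence from $Z$ to $X$ has length $2$ and corresponds to $\gamma(\alpha)\gamma(\delta)$, i.e.\ composition in $\mathfrak{B}^{\mathrm{op}}$ in the correct order. Sequences from $X$ to $X$ or $Z$ to $Z$: a length-$2$ sequence $(x^{(d)},y^{(c)})\in\alpha$, $(y^{(d)},x'^{(c)})\in\alpha$ is forbidden by condition~\eqref{c1}, and longer ones would need an interior $X$- or $Z$-vertex, which we excluded; so there are none. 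This matches $\beta(\delta\circ\alpha)=\beta(\delta)\beta(\alpha)$ and $\gamma(\delta\circ\alpha)=\gamma(\alpha)\gamma(\delta)$, exactly the composition law of $\mathfrak{B}^{\ltimes}$.

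I expect the main obstacle to be purely one of careful bookkeeping rather than a genuine difficulty: one must be scrupulous about the identification of $y^{(c)}\in\mathrm{Codom}(\alpha)$ with $y^{(d)}\in\mathrm{Dom}(\delta)$ inside $X_{\coprod}$ when translating "$b_i=a_{i+1}$" into a statement about boolean matrix products, and about the order-reversal that makes the second coordinate land in $\mathfrak{B}^{\mathrm{op}}$. The key conceptual point that makes everything collapse is that a pure PBR has \emph{no} edge with both endpoints on the same side, so an $(\alpha,\delta)$-connected sequence between frame vertices can never have an interior vertex lying in $X$ or $Z$ and can never use two consecutive same-frame edges; forcing every such sequence to have length exactly two. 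I would then assemble these observations into the statement that $\alpha\mapsto(\beta(\alpha),\gamma(\alpha))$ is a bijective, composition-preserving, identity-preserving assignment, hence an isomorphism of categories $\mathfrak{PPB}\cong\mathfrak{B}^{\ltimes}$.
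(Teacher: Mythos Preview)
Your proposal is correct and follows exactly the same approach as the paper: both define the bijection $\alpha\mapsto(\beta,\gamma)$ by splitting the edges of a pure PBR according to direction, and both note that compatibility with composition is what remains to verify. The paper dispatches that verification in one sentence (``easy to check that this map is compatible with compositions on both sides''), whereas you actually carry out the case analysis showing every $(\alpha,\delta)$-connected sequence between outer vertices has length exactly two; your argument is the natural expansion of what the paper leaves implicit, and despite the slightly meandering false start about ``$Y\times Y$-reading'', the reasoning is sound.
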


\begin{proof}
By definition, these categories have the same objects. For
$\alpha\in \mathfrak{PPB}(X,Y)$, where $X,Y\in \mathfrak{PPB}$, let
$\beta\in \mathfrak{B}(X,Y)$ be the collection of all edges
$(a,b)$ of $\alpha$ such that $a\in\mathrm{Dom}(\alpha)$ and
$b\in\mathrm{Codom}(\alpha)$. Let $\gamma\in \mathfrak{B}^{\mathrm{op}}(X,Y)$ 
be the collection of all edges $(a,b)$ of $\alpha$ such that 
$a\in\mathrm{Codom}(\alpha)$ and $b\in\mathrm{Dom}(\alpha)$.
From the definition of pure PBRs it follows easily that the
map $\alpha\mapsto(\beta,\gamma)$ is a bijection from
$\mathfrak{PPB}(X,Y)$ to $\mathfrak{B}^{\ltimes}(X,Y)$. It is also easy to 
check that this map is compatible with compositions on both 
sides. The claim follows.
\end{proof}

\begin{remark}\label{rem73n}
{\rm
Under the identification of $\mathfrak{B}^{\ltimes}$
and $\mathfrak{PPB}$ from Proposition~\ref{prop73},
the ``diagonal'' image of $\mathfrak{B}$ in $\mathfrak{B}^{\ltimes}$
given by $\alpha\mapsto(\alpha,\alpha^{\bowtie})$ coincides with 
$\Phi_2(\mathfrak{B})$ (see Subsection~\ref{s3.2}).
This implies Proposition~\ref{prop4}.
}
\end{remark}

\subsection{Left and right polarized idempotents}\label{s5.2}

Let $X\in\mathfrak{PB}$ and $\alpha\in\mathfrak{PB}(X,X)$. The element
$\alpha$ is called a {\em left polarized idempotent} provided
that $\alpha$ contains all edges from $\varepsilon_X$ and any other
edge of $\alpha$ has the form $(a,b)$, where $a,b\in\mathrm{Codom}(\alpha)$.
Define a {\em right polarized idempotent} similarly using 
$\mathrm{Dom}(\alpha)$. It is easy to see that every left (right)
polarized idempotent is indeed an idempotent. In particular, the 
identity morphism $\varepsilon_X$ is both, left and right, polarized.
An example of a left polarized idempotent is
given in Figure~\ref{fig71} on the left. An example of a right polarized 
idempotent is given in Figure~\ref{fig71} on the right. We denote by
$PI(X,l)$ and $PI(X,r)$ the sets of left and right polarized idempotents 
in $\mathfrak{PB}(X,X)$, respectively.

\begin{lemma}\label{lem75}
Both $PI(X,l)$ and  $PI(X,r)$ are  submonoids of $\mathfrak{PB}(X,X)$
isomorphic to the commutative band (semilattice) $(\mathfrak{B}(X,X),\cup)$.
In particular, we have $|PI(X,l)|=|PI(X,r)|=2^{|X|^2}$.
\end{lemma}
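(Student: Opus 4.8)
The plan is to describe an explicit map on each of $PI(X,l)$ and $PI(X,r)$ and verify it is a monoid isomorphism onto $(\mathfrak{B}(X,X),\cup)$. For a left polarized idempotent $\alpha\in PI(X,l)$, every edge of $\alpha$ is either one of the prescribed edges of $\varepsilon_X$ (namely $(x^{(d)},x^{(c)})$ and $(x^{(c)},x^{(d)})$ for $x\in X$) or else has both endpoints in $\mathrm{Codom}(\alpha)$. Thus $\alpha$ is completely determined by the set of ``extra'' edges $(a^{(c)},b^{(c)})$, and this set is an arbitrary subset of $\mathrm{Codom}(\alpha)\times\mathrm{Codom}(\alpha)$, i.e.\ an arbitrary element of $\mathfrak{B}(X,X)$ once we identify $\mathrm{Codom}(\alpha)$ with $X$. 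Define $\lambda:PI(X,l)\to\mathfrak{B}(X,X)$ by $\lambda(\alpha)=\{(a,b): (a^{(c)},b^{(c)})\in\alpha\}$. This is clearly a bijection with inverse adding the $\varepsilon_X$-edges to a given binary relation. An identical argument with $\mathrm{Dom}$ in place of $\mathrm{Codom}$ handles $PI(X,r)$, giving the cardinality statement $|PI(X,l)|=|PI(X,r)|=|\mathfrak{B}(X,X)|=2^{|X|^2}$.

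The substantive point is that these bijections are semigroup homomorphisms when $\mathfrak{B}(X,X)$ is given the operation $\cup$; along the way this also shows $PI(X,l)$ and $PI(X,r)$ are closed under $\circ$, so they are submonoids (the identity $\varepsilon_X$ lies in both and maps to the empty relation, which is the identity for $\cup$). So the key computation is: for $\alpha,\alpha'\in PI(X,l)$, show $\lambda(\alpha'\circ\alpha)=\lambda(\alpha')\cup\lambda(\alpha)$. Unwinding the definition of $\circ$, an edge $(a,b)$ lies in $\alpha'\circ\alpha$ iff there is an $(\alpha,\alpha')$-connected sequence from $a$ to $b$. One should analyze such sequences: because both PBRs contain all of $\varepsilon_X$ and all their other edges live on the codomain side, every connected sequence from an element of $\mathrm{Dom}\coprod\mathrm{Codom}$ to another can be normalized, using the $\varepsilon_X$-edges to ``cross over'' at will, so that the relevant output edges $(a^{(c)},b^{(c)})$ are produced either by a single extra edge of $\alpha$ (traversed after crossing from the codomain of $\alpha'$ to the domain of $\alpha$ along $\varepsilon_X$, then back) or by a single extra edge of $\alpha'$. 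Conversely every extra edge of $\alpha$ and every extra edge of $\alpha'$ is realized this way. Hence the extra edges of the composite are exactly the union, giving the homomorphism property. The case of $PI(X,r)$ is symmetric (one may also simply apply the involution $\star$, which interchanges domain and codomain and hence swaps $PI(X,l)$ and $PI(X,r)$, noting that $\lambda$ transports to the analogous map and $\cup$ is commutative so no order reversal issue arises).

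The main obstacle I anticipate is the careful bookkeeping in the normalization of $(\alpha,\alpha')$-connected sequences: one must check that mixing extra edges from $\alpha$ and from $\alpha'$ within a single connected path can never produce an output edge $(a,b)$ that is not already attributable to a single extra edge of one of the two factors. The reason it works is that an extra edge of $\alpha$ has both endpoints in $\mathrm{Codom}(\alpha)=\mathrm{Dom}(\alpha')$ and an extra edge of $\alpha'$ has both endpoints in $\mathrm{Codom}(\alpha')$, while the only bridges between these two copies of $X$ are the $\varepsilon_X$-edges, which force the element labels to match; so after one extra $\alpha$-edge the path sits in $\mathrm{Dom}(\alpha')$, and to reach back to the final $\mathrm{Codom}(\alpha')$ (where the output edge $(a^{(c)},b^{(c)})$ of the composite lives) one must cross via $\varepsilon_X$, but then any further extra edge would relabel the element and one checks the net effect still lies in $\lambda(\alpha)\cup\lambda(\alpha')$ — in fact, because $\alpha$ and $\alpha'$ are idempotents containing $\varepsilon_X$, one sees that no genuinely ``new'' composite edges arise. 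Once this is pinned down the rest is routine, and the cardinality count follows immediately from $|\mathfrak{B}(X,X)|=2^{|X|^2}$.
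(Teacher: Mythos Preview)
Your proposal is correct and is exactly the straightforward computation the paper invokes without spelling out. Your worry in the last paragraph dissolves once you note that after an extra $\alpha$-edge (landing in $Y=\mathrm{Dom}(\alpha')$) the only available $\alpha'$-edge is the $\varepsilon$-edge to $Z=\mathrm{Codom}(\alpha')$, where the path must terminate since $\alpha$ has no edges touching $Z$; hence no chaining of extra edges is even possible and the inclusion $\lambda(\alpha'\circ\alpha)\subseteq\lambda(\alpha)\cup\lambda(\alpha')$ is immediate.
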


\begin{proof}
Straightforward computation.
\end{proof}

\subsection{Polarized factorization of partitioned 
binary relations}\label{s5.3}

Let $X,Y\in\mathfrak{PB}$ and $\alpha\in\mathfrak{PB}(X,Y)$.
Define $\gamma_{\alpha}$ as the pure PBR on $(X,Y)$
consisting of all edges from $\alpha$, which contain an element 
in $\mathrm{Dom}(\alpha)$ and an element in $\mathrm{Codom}(\alpha)$.
Define $\beta_{\alpha}$ as the left polarized idempotent in 
$\mathfrak{PB}(Y,Y)$ such that for every edge $(a,b)$, where 
$a,b\in Y=\mathrm{Codom}(\alpha)$ we have $(a,b)\in \alpha$ if and
only if $(a,b)\in\beta_{\alpha}$. Define $\delta_{\alpha}$ as the 
right  polarized idempotent in 
$\mathfrak{PB}(X,X)$ such that for every edge $(a,b)$, where 
$a,b\in X=\mathrm{Dom}(\alpha)$ we have $(a,b)\in \alpha$ if and
only if $(a,b)\in\delta_{\alpha}$.

The main result of this section is the following statement 
establishing {\em polarized factorization} of 
partitioned  binary relations.

\begin{theorem}\label{thm77}
Let $X,Y\in\mathfrak{PB}$, $\alpha\in\mathfrak{PB}(X,Y)$
and $\beta_{\alpha}$, $\gamma_{\alpha}$ and $\delta_{\alpha}$ be as 
defined above. Then 
$\alpha=\beta_{\alpha}\circ\gamma_{\alpha}\circ\delta_{\alpha}$ 
is the unique factorization  of $\alpha$ into a product of a left 
polarized idempotent, a pure PBR and a right polarized idempotent. 
\end{theorem}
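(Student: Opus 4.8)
The plan is to verify the factorization identity $\alpha = \beta_{\alpha}\circ\gamma_{\alpha}\circ\delta_{\alpha}$ directly from the definition of composition, and then to establish uniqueness by showing that any factorization of the required type must recover exactly the three pieces we defined. First I would partition the edges of an arbitrary PBR $\alpha$ on $(X,Y)$ into three disjoint classes according to which of $\mathrm{Dom}(\alpha)$, $\mathrm{Codom}(\alpha)$ the two endpoints lie in: the ``mixed'' edges (one endpoint in each), the edges internal to $\mathrm{Codom}(\alpha)$, and the edges internal to $\mathrm{Dom}(\alpha)$. By construction $\gamma_{\alpha}$ records exactly the mixed edges, $\beta_{\alpha}$ records the $\mathrm{Codom}$-internal edges (plus the edges of $\varepsilon_Y$), and $\delta_{\alpha}$ records the $\mathrm{Dom}$-internal edges (plus the edges of $\varepsilon_X$).

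For the identity, set $\aleph := (\delta_{\alpha}, \gamma_{\alpha}, \beta_{\alpha})$ and I would analyze $(\aleph)$-connected sequences connecting an element $a$ to an element $b$ with $a,b \in X\coprod Y$. The key structural observation is that $\gamma_{\alpha}$ is pure and $\beta_{\alpha},\delta_{\alpha}$ differ from the identity only by edges internal to one side, so any $(\aleph)$-connected sequence realizing an edge of the composite must begin (and dually end) by traversing the $\varepsilon$-part of $\delta_{\alpha}$ or $\beta_{\alpha}$ to transfer between the $d$- and $c$-copies. Tracing through the (at most three-PBR) alternation condition, I expect to find that the $(\aleph)$-connected sequences connecting $a$ to $b$ are in bijective correspondence with the three edge-types of $\alpha$: a short sequence using only $\delta_{\alpha}$-edges gives back a $\mathrm{Dom}$-internal edge, one using only $\beta_{\alpha}$-edges gives a $\mathrm{Codom}$-internal edge, and one threading $\delta_{\alpha}$, $\gamma_{\alpha}$, $\beta_{\alpha}$ (with the $\varepsilon$-edges absorbing the copy-changes) gives back a mixed edge. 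This should show $\beta_{\alpha}\circ\gamma_{\alpha}\circ\delta_{\alpha}$ has exactly the edges of $\alpha$; associativity (Proposition~\ref{thm1}) lets me compute the triple product in whichever order is convenient.

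For uniqueness, suppose $\alpha = \beta \circ \gamma \circ \delta$ with $\beta \in PI(Y,l)$, $\gamma \in \mathfrak{PPB}(X,Y)$, $\delta \in PI(X,r)$. Since $\gamma$ is pure and $\beta$, $\delta$ are built on top of $\varepsilon_Y$, $\varepsilon_X$, I would argue that the $\mathrm{Dom}$-internal edges of the composite $\alpha$ can only arise from the internal (non-$\varepsilon$) edges of $\delta$ — no route through $\gamma$ or $\beta$ can produce such an edge because leaving $\mathrm{Dom}(X)$ via $\gamma$ forces arrival in $\mathrm{Codom}(Y)$ — hence $\delta = \delta_{\alpha}$; symmetrically $\beta = \beta_{\alpha}$; and then the mixed edges of $\alpha$ force $\gamma = \gamma_{\alpha}$. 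The main obstacle I anticipate is the careful bookkeeping in the existence part: condition~\eqref{c1} (no two successive edges from the same PBR) interacts subtly with the need to pass through the $\varepsilon$-edges of $\beta_{\alpha}$ and $\delta_{\alpha}$, and one must check that the required alternating route genuinely exists for every mixed edge of $\alpha$ and that no spurious extra edges are created (in particular that the internal edges of $\beta_{\alpha}$ and $\delta_{\alpha}$ do not combine with $\gamma_{\alpha}$-edges to manufacture edges not present in $\alpha$). Handling this cleanly will likely require a short case analysis on the position (first, last, or interior) of each edge-type within an $(\aleph)$-connected sequence.
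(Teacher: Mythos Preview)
Your existence argument is essentially the paper's ``straightforward computation'' spelled out in more detail, and your case analysis of $(\delta_\alpha,\gamma_\alpha,\beta_\alpha)$-connected sequences is correct: because $\gamma_\alpha$ is pure and the polarized idempotents only add edges on one side of $\varepsilon$, every connected sequence from $X_1$ has length~$1$ (a $\delta$-internal edge, terminating in $X_1$) or length~$3$ (an $\varepsilon$-edge of $\delta$, a $\gamma$-edge, an $\varepsilon$-edge of $\beta$, terminating in $X_4$), and dually from $X_4$. This recovers exactly the three edge-types of~$\alpha$, and the ``spurious edge'' worry you flag does not materialise.

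For uniqueness you take a different route from the paper. The paper argues by counting: once existence gives a surjection
\[
PI(Y,l)\times\mathfrak{PPB}(X,Y)\times PI(X,r)\longrightarrow\mathfrak{PB}(X,Y),
\]
the identity $2^{|Y|^2}\cdot 2^{2|X||Y|}\cdot 2^{|X|^2}=2^{(|X|+|Y|)^2}$ forces it to be a bijection. Your structural argument instead reads off $\delta$, $\beta$, $\gamma$ directly from~$\alpha$: the same path analysis that proves existence shows that in \emph{any} such factorization the $\mathrm{Dom}$-internal edges of the composite are exactly the internal edges of~$\delta$, the $\mathrm{Codom}$-internal edges are exactly those of~$\beta$, and the mixed edges are exactly those of~$\gamma$. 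Both arguments are valid. The counting proof is shorter and avoids the second round of case analysis; your approach is more explicit, makes clear \emph{why} each factor is forced, and would survive in contexts where a cardinality comparison is unavailable (e.g.\ infinite $X,Y$).
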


\begin{proof}
That  $\alpha=\beta_{\alpha}\circ\gamma_{\alpha}\circ\delta_{\alpha}$ 
is checked by a  straightforward computation, proving existence.
Having established existence, uniqueness is proved by a counting
argument. Indeed, we have $|\mathfrak{PB}(X,Y)|=2^{(|X|+|Y|)^2}$.
At the same time, the number of left polarized idempotents in 
$\mathfrak{PB}(Y,Y)$ equals $2^{|Y|^2}$, the number of right 
polarized idempotents in  $\mathfrak{PB}(X,X)$ equals $2^{|X|^2}$
and the number of pure PBRs on $(X,Y)$ equals $2^{2|X||Y|}$.
Hence the multiplication rule implies that
\begin{displaymath}
|\mathfrak{PB}(X,Y)|=|PI(Y,l)\times \mathfrak{PPB}(X,Y)\times PI(X,r)|
\end{displaymath}
and the claim follows.
\end{proof}

It is easy to see that the polarized factorization in 
$\mathfrak{PB}$ gives rise to a factorization in $\overline{\mathfrak{PB}}$.
Theorem~\ref{thm77} shows that morphisms of the relatively complicated 
category $\mathfrak{PB}$ decompose canonically into a product of morphisms
from the less complicated category $\mathfrak{PPB}$ and elements of 
some commutative bands. 

\subsection{Composition of PBRs via composition 
of binary relations}\label{s5.4}

The polarized decomposition of PBRs motivates the following construction:
For a PBR $\alpha$ consider the following subsets of $\alpha$:
\begin{displaymath}
\begin{array}{rcl}
\alpha_{11}&:=&\{(a,b)\in \alpha:a\in \mathrm{Dom}(\alpha), 
b\in \mathrm{Dom}(\alpha)\},\\  
\alpha_{12}&:=&\{(a,b)\in \alpha:a\in \mathrm{Dom}(\alpha), 
b\in \mathrm{Codom}(\alpha)\},\\  
\alpha_{21}&:=&\{(a,b)\in \alpha:a\in \mathrm{Codom}(\alpha), 
b\in \mathrm{Dom}(\alpha)\},\\  
\alpha_{22}&:=&\{(a,b)\in \alpha:a\in \mathrm{Codom}(\alpha), 
b\in \mathrm{Codom}(\alpha)\}. 
\end{array}
\end{displaymath}
Then $\alpha$ is a disjoint union of the $\alpha_{ij}$'s,
$i,j=1,2$. Moreover, the $\alpha_{ij}$'s can be interpreted in terms of 
factors of the polarized decomposition of $\alpha$ in the obvious way 
(i.e. $\gamma_{\alpha}=\alpha_{12}\cup\alpha_{21}$,
$\delta_{\alpha}=\varepsilon_X\cup \alpha_{11}$ and 
$\beta_{\alpha}=\varepsilon_Y\cup \alpha_{22}$).

Given a PBR $\beta$ composable with $\alpha$, directly from the 
definition of the product we obtain the following formulae:
\begin{equation}\label{eq46}
\begin{array}{rcc}
(\beta\circ\alpha)_{11}&=&\displaystyle \alpha_{11}\cup
\bigcup_{i\geq 1} \alpha_{21}\circ(\beta_{11}\circ\alpha_{22})^i
\circ\beta_{11}\circ\alpha_{12},\\
(\beta\circ\alpha)_{22}&=&\displaystyle \beta_{22}\cup
\bigcup_{i\geq 1} \beta_{12}\circ(\alpha_{22}\circ\beta_{11})^i
\circ\alpha_{22}\circ\beta_{21},\\
(\beta\circ\alpha)_{12}&=&\displaystyle  \bigcup_{i\geq 0} 
\beta_{12}\circ(\alpha_{22}\circ\beta_{11})^i \circ\alpha_{12},\\
(\beta\circ\alpha)_{21}&=&\displaystyle  \bigcup_{i\geq 0}
\alpha_{21}\circ(\beta_{11}\circ\alpha_{22})^i \circ\beta_{21}.
\end{array}
\end{equation}

\subsection{On random products of PBRs}\label{s5.5}

For a finite set $X$ denote by $\omega_X$ the maximum binary
relation on $X$ with respect to  inclusions (i.e. the {\em full}
relation). Denote also by $\overline{\omega}_X$ the maximum
PBR on $(X,X)$ with respect to inclusions. Let $A_X$ denote the set 
of all pairs $(\alpha,\alpha')\in \mathfrak{B}(X,X)\times \mathfrak{B}(X,X)$ 
such that $\alpha\circ\alpha'=\omega_X$. Let $\overline{A}_X$ denote the 
set  of all pairs $(\alpha,\alpha')\in 
\mathfrak{PB}(X,X)\times \mathfrak{PB}(X,X)$ 
such that $\alpha\circ\alpha'=\overline{\omega}_X$.
Recall the following classical result (see e.g. \cite[Theorem~4]{KR}):

\begin{proposition}\label{prop41}
We have:
\begin{displaymath}
\lim_{|X|\to\infty}\frac{|A_X|}{|\mathfrak{B}(X,X)\times 
\mathfrak{B}(X,X)|}=1. 
\end{displaymath}
\end{proposition}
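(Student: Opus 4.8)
The statement to prove is Proposition~\ref{prop41}, which asserts that the proportion of pairs $(\alpha,\alpha')$ of binary relations on $X$ with $\alpha\circ\alpha'=\omega_X$ (the full relation) tends to $1$ as $|X|\to\infty$. This is attributed to a classical source \cite{KR}, so the plan is essentially to recall/reconstruct the standard argument.

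The plan is to use a complementary counting (union bound) argument. Write $n=|X|$, so that $|\mathfrak{B}(X,X)|=2^{n^2}$ and there are $2^{2n^2}$ pairs $(\alpha,\alpha')$. The product $\alpha\circ\alpha'$ fails to be the full relation $\omega_X$ precisely when there exist $x,y\in X$ with $(x,y)\notin\alpha\circ\alpha'$, i.e.\ there is no $z\in X$ with $(x,z)\in\alpha'$ and $(z,y)\in\alpha$. First I would fix a single pair $(x,y)\in X\times X$ and estimate the probability, under the uniform distribution on pairs $(\alpha,\alpha')$, that this particular entry is missing. For each $z\in X$ the event ``$(x,z)\in\alpha'$ and $(z,y)\in\alpha$'' depends on two distinct matrix entries (one in $\alpha'$, one in $\alpha$), and across the $n$ choices of $z$ these $2n$ entries are all distinct, hence independent and each equal to $1$ with probability $1/2$. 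So the probability that entry $(x,y)$ is missing from the product is $\prod_{z\in X}\bigl(1-\tfrac14\bigr)=(3/4)^n$.

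Next I would apply the union bound over all $n^2$ pairs $(x,y)$: the probability that $\alpha\circ\alpha'\neq\omega_X$ is at most $n^2(3/4)^n$. Since $n^2(3/4)^n\to 0$ as $n\to\infty$, it follows that
\begin{displaymath}
\frac{|A_X|}{|\mathfrak{B}(X,X)\times\mathfrak{B}(X,X)|}\geq 1-n^2\left(\frac34\right)^n\longrightarrow 1,
\end{displaymath}
and since this ratio is always at most $1$, the limit equals $1$. This completes the argument.

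The only genuinely delicate point is the independence claim used to compute $(3/4)^n$: one must check that for a fixed $(x,y)$, the $2n$ boolean entries $\{\alpha'_{x,z}:z\in X\}\cup\{\alpha_{z,y}:z\in X\}$ are pairwise distinct as coordinates of the product probability space $\mathfrak{B}(X,X)\times\mathfrak{B}(X,X)$. This holds because the entries $\alpha'_{x,z}$ all lie in the first factor and are indexed by distinct pairs $(x,z)$, the entries $\alpha_{z,y}$ all lie in the second factor and are indexed by distinct pairs $(z,y)$, and entries in different factors are automatically independent; so the main obstacle is really just bookkeeping rather than a conceptual hurdle. (One should also note the harmless edge case $X=\varnothing$, where $\omega_X$ is empty and both sides are trivial; for $n\geq 1$ the bound above is what matters.)
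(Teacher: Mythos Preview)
Your argument is correct. The paper does not actually prove this proposition: it is quoted as a classical result with a reference to \cite{KR}, so there is no in-paper proof to compare against. The union-bound computation you give (probability $(3/4)^n$ that a fixed entry of the product vanishes, then a factor of $n^2$ from summing over entries) is exactly the standard first-moment argument for this fact, and your independence check is fine since for fixed $(x,y)$ the $2n$ relevant Boolean coordinates are pairwise distinct across the two factors.
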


%We include a short proof for reader's convenience.
%
%\begin{proof}
%Let $v$ and $w$ be two (random) Boolean vectors with $n$ coordinates
%and $v\cdot w$ be their Boolean scalar product (i.e. the scalar
%product computed over the Boolean algebra). The first question
%we ask is: {\em What is the probability of $v\cdot w=0$?}
%Let $v=(v_1,\dots,v_n)\in\{0,1\}^n$, $w=(w_1,\dots,w_n)\in\{0,1\}^n$,
%$M_v=\{i:v_i=1\}$ and $M_w=\{i:w_i=1\}$. Then $v\cdot w=0$ is
%equivalent to $M_v\cap M_w=\varnothing$. Having fixed $M_v$ of
%cardinality $k$, the set $M_w$ can be chosen in $2^{n-k}$ different
%ways. Hence, using the binomial formula, we obtain that the probability 
%of $v\cdot w=0$ equals  $\left(\frac{3}{4}\right)^n$. 
% 
%If $|X|=n$, we can identify binary relations with $n\times n$
%Boolean matricies. Under this identitfication the full relation 
%$\omega_X$ corresponds to the matrix $\mathbf{1}_n$ consisting entirely
%of  $1$'s. Hence, to prove the proposition we have to compute the
%probability of the event that the product of two randomly chosen 
%Boolean matricies equals $\mathbf{1}_n$. We estimate the
%probability of the complement. From the previous paragraph it follows
%that the probability of the event 
%\begin{displaymath}
%\text{the $i,j$-th coefficient of the product equals $0$} 
%\end{displaymath}
%equals $\left(\frac{3}{4}\right)^n$. Since the
%product matrix has $n^2$ coefficients, the probability that at least
%one of these coefficients equals $0$ cannot exceed 
%$n^2\left(\frac{3}{4}\right)^n$. The latter approaches $0$ when
%$n$ tends to infinity, which implies our claim.
%\end{proof}

Let $A'_X$ denote the set 
of all  $(\alpha,\alpha',\alpha'')\in 
\mathfrak{B}(X,X)\times \mathfrak{B}(X,X)\times \mathfrak{B}(X,X)$ 
such that $\alpha\circ\alpha'\circ\alpha''=\omega_X$. 

\begin{corollary}\label{cor44}
We have:
\begin{displaymath}
\lim_{|X|\to\infty}\frac{|A'_X|}{|\mathfrak{B}(X,X)\times 
\mathfrak{B}(X,X)\times  \mathfrak{B}(X,X)|}=1. 
\end{displaymath}
\end{corollary}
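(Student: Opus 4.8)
The plan is to deduce Corollary~\ref{cor44} directly from Proposition~\ref{prop41} by conditioning on the middle factor and then invoking Proposition~\ref{prop41} for each fixed value of that factor. Write $n:=|X|$ and $N:=|\mathfrak{B}(X,X)|=2^{n^2}$. First I would observe that $(\alpha,\alpha',\alpha'')\in A'_X$ if and only if $(\alpha,\,\alpha'\circ\alpha'')$ is a pair whose product is $\omega_X$; but this is not quite a direct reduction because the map $(\alpha',\alpha'')\mapsto\alpha'\circ\alpha''$ is far from a bijection, so the counting needs a little care. Instead I would fix $\alpha''$ and count, for how many of the $N$ choices of $\alpha''$ it is the case that $\alpha'\circ\alpha''$ ranges over ``enough'' relations.

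The cleanest route: note that whenever $\alpha''=\omega_X$ (the full relation), we have $\alpha'\circ\omega_X=\omega_X$ for \emph{every} $\alpha'$ that has at least one nonzero entry in every row --- and in any case, more simply, for $\alpha''=\varepsilon_X$ (the identity relation) we get $\alpha'\circ\varepsilon_X=\alpha'$, so that $(\alpha,\alpha',\varepsilon_X)\in A'_X$ exactly when $(\alpha,\alpha')\in A_X$. Thus
\begin{displaymath}
|A'_X|\ \geq\ |\{(\alpha,\alpha')\colon \alpha\circ\alpha'=\omega_X\}|\ =\ |A_X|,
\end{displaymath}
which already gives the lower bound $|A'_X|/N^3\geq |A_X|/N^2$. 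This is not enough by itself since the denominator grew by a factor $N$. So the real argument must exhibit that for a proportion of $\alpha''$ tending to $1$, the fibre over $\alpha''$ in $A'_X$ has size asymptotically $N^2$.

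The key step is therefore: for \emph{fixed} $\alpha''$, count the pairs $(\alpha,\alpha')$ with $\alpha\circ\alpha'\circ\alpha''=\omega_X$. If $\alpha''$ is such that right-multiplication by $\alpha''$ is surjective onto the ``useful'' part --- concretely, if $\alpha''$ has the property that for every subset $S\subseteq X$ there is some $\alpha'$ with $\alpha'\circ\alpha''\supseteq$ the relation with all of $S$ in a given row, equivalently if $\alpha''$ has no zero column --- then $\{\alpha'\circ\alpha''\colon\alpha'\in\mathfrak{B}(X,X)\}$ contains $\omega_X$, and moreover the preimage structure is controlled. The cleanest realization: condition on the event that $\alpha''$ has no zero column (which happens with probability $(1-2^{-n})^n\to 1$); for such $\alpha''$, the full relation $\omega_X$ lies in the image of $-\circ\alpha''$, and in fact $\alpha'\circ\alpha''=\omega_X$ already holds whenever $\alpha'$ is such that $\alpha'\circ\alpha''\supseteq\omega_X$... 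Let me instead use the slicker fact that $\omega_X\circ\alpha''=\omega_X$ as soon as $\alpha''$ has no zero column; then for such $\alpha''$, taking $\alpha'=\omega_X$, we reduce to counting $\alpha$ with $\alpha\circ\omega_X=\omega_X$, which holds for all $\alpha$ with no zero column again --- giving a fibre of size at least $(1-2^{-n})^n\cdot N\cdot N$. Multiplying the two ``no zero column'' probabilities,
\begin{displaymath}
\frac{|A'_X|}{N^3}\ \geq\ \bigl(1-2^{-n}\bigr)^{3n}\ \xrightarrow[n\to\infty]{}\ 1,
\end{displaymath}
and since the ratio is always $\leq 1$, the limit is $1$.

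The main obstacle I anticipate is getting the intermediate combinatorial fact exactly right: one must check that $\alpha\circ\beta=\omega_X$ holds whenever $\alpha$ has no zero row and $\beta$ has no zero column (so that every $(x,y)$ entry of the product is hit), and identify the correct asymptotically-full event --- ``no zero row'' versus ``no zero column'' versus the full condition of Proposition~\ref{prop41} --- so that the probabilities genuinely multiply to something tending to $1$. In fact the honest statement one wants is: $\mathbb{P}(\alpha\circ\beta\circ\gamma=\omega_X)\geq \mathbb{P}(\alpha\text{ has no zero row})\cdot\mathbb{P}(\gamma\text{ has no zero column})$, each factor being $(1-2^{-n})^n\to1$, with $\beta$ free. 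If for some reason a cruder estimate than Proposition~\ref{prop41} suffices (as the above suggests it does), one should still cite Proposition~\ref{prop41} as the model and note that the three-fold version is in fact easier because one has an extra free factor to absorb the constraint. I would keep the write-up to a few lines along exactly these lines.
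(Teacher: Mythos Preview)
Your argument has a genuine gap. The claim you flag as ``the intermediate combinatorial fact'' --- that $\alpha\circ\beta=\omega_X$ whenever $\alpha$ has no zero row and $\beta$ has no zero column --- is simply false: for $n=2$, take $\alpha=\begin{pmatrix}1&0\\1&0\end{pmatrix}$ and $\beta=\begin{pmatrix}0&0\\1&1\end{pmatrix}$; both satisfy the row/column conditions yet $(\alpha\beta)_{1,1}=0$. The condition $\alpha\circ\beta=\omega_X$ is much stronger than the conjunction of ``no zero row'' and ``no zero column'': it demands that every row of $\alpha$ meet every column of $\beta$, and establishing that this holds with probability tending to $1$ is precisely the content of Proposition~\ref{prop41}, not a triviality you can bypass. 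Relatedly, your fibre count is off by a factor of $N$: once you set $\alpha'=\omega_X$ you have fixed a \emph{single} value of $\alpha'$, so the fibre over a given $\alpha''$ is bounded below only by roughly $N$, not $N^2$, and the displayed bound $(1-2^{-n})^{3n}$ does not follow.

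The paper's proof does use Proposition~\ref{prop41}, and in the simplest possible way: by Proposition~\ref{prop41} the events $\{\alpha\circ\alpha'=\omega_X\}$ and $\{\alpha'\circ\alpha''=\omega_X\}$ each have probability tending to $1$, hence so does their intersection; on that intersection $\alpha''$ has no zero column (since $\alpha'\circ\alpha''=\omega_X$), and then $\alpha\circ\alpha'\circ\alpha''=\omega_X\circ\alpha''=\omega_X$. You could shorten this slightly by replacing the second application of Proposition~\ref{prop41} with the elementary observation that $\alpha''$ has no zero column with probability $(1-2^{-n})^n\to1$, but you cannot dispense with Proposition~\ref{prop41} for the pair $(\alpha,\alpha')$.
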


\begin{proof}
By Proposition~\ref{prop41}, when $|X|\to\infty$ both 
the probability of $\alpha\circ \alpha'=\omega_X$ and
of $\alpha'\circ\alpha''=\omega_X$ tend to $1$. Hence 
the probability of the intersection of these events  tends
to $1$ as well. However, if $\alpha'\circ\alpha''=\omega_X$,
then the Boolean matrix of $\alpha''$ cannot have zero columns.
Hence, in this case $\alpha\circ \alpha'=\omega_X$ implies
$\alpha\circ\alpha'\circ\alpha''=\omega_X$. The claim follows.
\end{proof}

\begin{remark}\label{rem43}
{\em
Proposition~\ref{prop41} combined with \cite[Theorem~6]{GM1} 
implies \cite[Conjecture~5]{GM2}.
}
\end{remark}

In the following statement we extend Proposition~\ref{prop41}
to PBRs.

\begin{theorem}\label{thm42}
We have:
\begin{displaymath}
\lim_{|X|\to\infty}\frac{|\overline{A}_X|}{|\mathfrak{PB}(X,X)
\times \mathfrak{PB}(X,X)|}=1. 
\end{displaymath}
\end{theorem}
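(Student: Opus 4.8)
The plan is to reduce the statement about PBRs to the already-established statement about ordinary binary relations (Proposition~\ref{prop41}) by using the polarized factorization and the block decomposition of Subsection~\ref{s5.4}. First I would unwind what $\alpha\circ\alpha'=\overline{\omega}_X$ means in terms of the four blocks: since $\overline{\omega}_X$ is the full PBR on $(X,X)$, all four blocks $(\beta\circ\alpha)_{ij}$ must equal $\omega_X$, where here I write $\alpha'$ for the first factor and $\alpha$ for the second and $\beta:=\alpha'$ in the notation of \eqref{eq46}. Looking at the formulae \eqref{eq46}, a convenient \emph{sufficient} condition for all four blocks to be full is, for instance, that $\alpha_{12}=\alpha_{21}=\beta_{12}=\beta_{21}=\omega_X$ and $\alpha_{12}\circ\alpha_{12}$-type products stabilize; more carefully, I would look for an easily-describable ``good'' subset $G_X\subset\mathfrak{PB}(X,X)\times\mathfrak{PB}(X,X)$ on which one can check directly from \eqref{eq46} that the product is $\overline{\omega}_X$, and such that $|G_X|/|\mathfrak{PB}(X,X)\times\mathfrak{PB}(X,X)|\to 1$.

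The key step is choosing $G_X$. Each PBR $\alpha$ on $(X,X)$ is, by the disjointness of the blocks, the same data as a quadruple $(\alpha_{11},\alpha_{12},\alpha_{21},\alpha_{22})$ of ordinary binary relations on $X$, chosen independently; so $\mathfrak{PB}(X,X)\cong\mathfrak{B}(X,X)^4$ as a set and the uniform measure is the product measure. I would let $G_X$ be the set of pairs $((\alpha_{ij}),(\beta_{ij}))$ for which the relevant products of the cross-blocks are already full at low order. Concretely, from the third formula in \eqref{eq46}, $(\beta\circ\alpha)_{12}\supseteq\beta_{12}\circ\alpha_{12}$, so it suffices that $\beta_{12}\circ\alpha_{12}=\omega_X$; similarly $(\beta\circ\alpha)_{21}\supseteq\alpha_{21}\circ\beta_{21}$, and from the first two formulae $(\beta\circ\alpha)_{11}\supseteq\alpha_{21}\circ\beta_{11}\circ\alpha_{12}$ and $(\beta\circ\alpha)_{22}\supseteq\beta_{12}\circ\alpha_{22}\circ\beta_{21}$. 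Thus I would take $G_X$ to be the set of pairs for which
\begin{displaymath}
\beta_{12}\circ\alpha_{12}=\omega_X,\quad \alpha_{21}\circ\beta_{21}=\omega_X,\quad \alpha_{21}\circ\beta_{11}\circ\alpha_{12}=\omega_X,\quad \beta_{12}\circ\alpha_{22}\circ\beta_{21}=\omega_X,
\end{displaymath}
and then $(\alpha,\beta)\in G_X$ implies $\beta\circ\alpha=\overline{\omega}_X$, i.e. $G_X\subseteq\overline{A}_X$ (after renaming so that $\beta$ is the left factor). It remains to show $|G_X|/|\mathfrak{B}(X,X)|^8\to 1$.

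For the density estimate I would invoke Proposition~\ref{prop41} (and the style of argument in Corollary~\ref{cor44}) repeatedly, together with independence of the eight blocks. The event $\beta_{12}\circ\alpha_{12}=\omega_X$ has probability $\to 1$ by Proposition~\ref{prop41} applied to the independent pair $(\beta_{12},\alpha_{12})$; likewise for $\alpha_{21}\circ\beta_{21}=\omega_X$. For the triple products, by Corollary~\ref{cor44} the probability that a product of three independent uniformly random binary relations equals $\omega_X$ tends to $1$; applying this to $(\alpha_{21},\beta_{11},\alpha_{12})$ and to $(\beta_{12},\alpha_{22},\beta_{21})$ — these triples involve six distinct blocks, hence are each a genuinely independent triple — gives probability $\to 1$ for each. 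Since $G_X$ is the intersection of four events each of probability $\to 1$, its probability also $\to 1$, and since $G_X\subseteq\overline{A}_X$ the theorem follows. The main obstacle I anticipate is purely bookkeeping: making sure the blocks appearing in the four chosen conditions really are mutually independent where needed (e.g. $\alpha_{12}$ appears in two of the four conditions, so those two events are not independent of each other, but that is harmless — I only need each individual event to have probability tending to $1$, then use a union bound on the complements), and double-checking the direction of composition and the $\alpha/\beta$ naming against \eqref{eq46} so that the inclusions above are the correct ones. A secondary point to verify is that Corollary~\ref{cor44}, as stated, is exactly what is needed for the triple-product events; if one prefers to avoid it, the same conclusion follows directly from Proposition~\ref{prop41} by the argument in the proof of Corollary~\ref{cor44} (a full product of three random relations is full as soon as the last two multiply to the full relation, which forces the first factor's columns to be nonzero).
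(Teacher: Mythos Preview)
Your proposal is correct and follows essentially the same route as the paper: identify a PBR with its four blocks, take as sufficient conditions exactly the four equalities $\beta_{12}\circ\alpha_{12}=\omega_X$, $\alpha_{21}\circ\beta_{21}=\omega_X$, $\alpha_{21}\circ\beta_{11}\circ\alpha_{12}=\omega_X$, $\beta_{12}\circ\alpha_{22}\circ\beta_{21}=\omega_X$, and invoke Proposition~\ref{prop41} and Corollary~\ref{cor44} to conclude each holds with probability tending to $1$. Your only slip is in the closing parenthetical, where the clause ``forces the first factor's columns to be nonzero'' should read ``forces the last factor's columns to be nonzero'' (and one also needs the first two to multiply to $\omega_X$, as in the proof of Corollary~\ref{cor44}); this does not affect the main argument.
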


\begin{proof}
By Subsection~\ref{s5.4}, choosing a PBR $\alpha$ is equivalent to
choosing four binary relations $\alpha_{ij}$, $i,j=1,2$. 
By \eqref{eq46}, $\beta\circ\alpha=\overline{\omega}_{X}$
is guaranteed by the following list of conditions:
\begin{displaymath}
\beta_{12}\circ\alpha_{12}=\omega_{X};\quad 
\alpha_{21}\circ\beta_{21}=\omega_{X};\quad 
\alpha_{21}\circ\beta_{11}\circ\alpha_{12}=\omega_{X};\quad 
\beta_{12}\circ\alpha_{22}\circ\beta_{21}=\omega_{X}.
\end{displaymath}
By Proposition~\ref{prop41} and Corollary~\ref{cor44}, when
$|X|\to\infty$, the probability of each of these conditions
tends to $1$. Hence the probability of their intersection tends
to $1$ as well. The claim follows.
\end{proof}

\vspace{0.3cm}

\noindent
P.~M.: Department of Pure Mathematics, University of Leeds, Leeds, 
LS2 9JT, UK, e-mail: {\tt ppmartin\symbol{64}maths.leeds.ac.uk }
\vspace{0.3cm}

\noindent
V.~M: Department of Mathematics, Uppsala University, Box. 480,
SE-75106, Uppsala, SWEDEN, email: {\tt mazor\symbol{64}math.uu.se}

\end{document}